\definecolor{ddarkbrown}{rgb}{0.5,0.2,0.05} \definecolor{bbluegray}{rgb}{0.05,0,0.5}
\newtheorem{theorem}{Theorem}[section]
\newtheorem{proposition}[theorem]{Proposition}
\newtheorem{definition}[theorem]{Definition}
\newtheorem{lemma}[theorem]{Lemma}
\newtheorem{corollary}[theorem]{Corollary}
\newtheorem{example}[theorem]{Example}
\renewenvironment{proof}{\textbf{Proof.}}{\QED\bigskip}
\newcommand{\BEAS}{\begin{eqnarray*}}
\newcommand{\EEAS}{\end{eqnarray*}}
\newcommand{\BEA}{\begin{eqnarray}}
\newcommand{\EEA}{\end{eqnarray}}
\newcommand{\BEQ}{\begin{equation}}
\newcommand{\EEQ}{\end{equation}}
\newcommand{\BIT}{\begin{itemize}}
\newcommand{\EIT}{\end{itemize}}
\newcommand{\BNUM}{\begin{enumerate}}
\newcommand{\ENUM}{\end{enumerate}}
\newcommand{\BA}{\begin{array}}
\newcommand{\EA}{\end{array}}
\newcommand{\ones}{\mathbf 1}
\newcommand{\reals}{{\mathbb R}}
\newcommand{\Expect}{\textstyle\mathop{\bf E}}
\newcommand{\QED}{~~\rule[-1pt]{6pt}{6pt}}
\newcommand{\argmin}{\mathop{\rm argmin}}
\newcommand{\dom}{\mathop{\bf dom}}
\renewcommand{\dom}{\mbox{dom}}
\let \oldsection \section
\renewcommand{\section}{\vspace{3ex plus 1ex}\oldsection}
\begin{document}
\title{Optimal Affine Invariant Smooth Minimization Algorithms}

\author{Alexandre d'Aspremont}
\address{CNRS \& D.I., UMR 8548, \vskip 0ex
\'Ecole Normale Sup\'erieure, Paris, France.}
\email{aspremon@ens.fr}

\author{Crist\'obal Guzm\'an}\footnote{Part of this work was done 
during a postdoctoral position at Centrum Wiskunde \& Informatica}
\address{Facultad de Matem\'aticas \& Escuela de Ingenier\'ia,
Pontificia Universidad Cat\'olica de Chile}
\email{crguzmanp@uc.cl}

\author{Martin Jaggi}
\address{Ecole Polytechnique F\'ed\'erale de Lausanne, Switzerland.}
\email{m.jaggi@gmail.com}

\keywords{}
\date{\today}
\subjclass[2010]{}

\begin{abstract}
We formulate an affine invariant implementation of the accelerated first-order algorithm in \citep{Nest83}. Its complexity bound is proportional to an affine invariant regularity constant defined with respect to the Minkowski gauge of the feasible set. We extend these results to more general problems, optimizing H\"older smooth functions using $p$-uniformly convex prox terms, and derive an algorithm whose complexity better fits the geometry of the feasible set and adapts to both the best H\"older smoothness parameter and the best gradient Lipschitz constant. Finally, we detail matching complexity lower bounds when the feasible set is an $\ell_p$ ball. In this setting, our upper bounds on iteration complexity for the algorithm in \citep{Nest83} are thus optimal in terms of target precision, smoothness and problem dimension.
\end{abstract}
\maketitle

\section{Introduction}\label{s:intro}
Here, we show how to implement the smooth minimization algorithm described in \citep{Nest83,Nest03} so that both its iterations and its complexity bound are invariant with respect to a change of coordinates in the problem. We focus on a generic convex minimization problem written
\BEQ\label{eq:min-pb}
\BA{ll}
\mbox{minimize } & f(x)\\
\mbox{subject to} & x \in Q,\\
\EA\EEQ
where $f$ is a convex function with Lipschitz continuous gradient and $Q$ is a compact convex set. Without too much loss of generality, we will assume that the interior of $Q$ is nonempty and contains zero. When $Q$ is sufficiently simple, in a sense that will be made precise later, \citet{Nest83} showed that this problem can be solved with a complexity $O(1/\sqrt{\varepsilon})$, where $\varepsilon$ is the target precision. Furthermore, it can be shown that this complexity bound is optimal in $\varepsilon$ for the class of smooth problems \citep{Nest03a}. 

While the dependence in $1/\sqrt{\varepsilon}$ of the complexity bound in \citet{Nest83} is optimal in $\varepsilon$, the various factors in front of that bound contain parameters which can heavily vary with implementation, i.e. the choice of norm and prox regularization function. In fact, the full upper bound on the iteration complexity of the optimal algorithm in \citep{Nest03a} is written
\[
\sqrt{\frac{8Ld(x^\star)}{\sigma\varepsilon}}
\]
where $L$ is the Lipschitz constant of the gradient, $d(x^\star)$ the value of the prox at the optimum and $\sigma$ its strong convexity parameter, all varying with the choice of norm and prox. This means in particular that, everything else being equal, this bound is not invariant with respect to an affine change of coordinates. 

Arguably then, the complexity bound varies while the intrinsic complexity of problem~\eqref{eq:min-pb} remains unchanged. Optimality in~$\varepsilon$ is thus no guarantee of computational efficiency, and a poorly parameterized optimal method can exhibit far from optimal numerical performance. On the other hand, optimal choices of norm and prox, hence of $L$ and $d$ should produce affine invariant bounds. Hence, affine invariance, besides its implications in terms of numerical stability, can also act as a guide to optimally choose norm and prox.


Here, we show how to choose an underlying norm and a prox term for the algorithm in \citep{Nest83,Nest03} which make its iterations and complexity invariant by a change of coordinates. In Section~\ref{s:aff}, we construct the norm as the Minkowski gauge of centrally symmetric sets $Q$, then derive the prox using a definition of the regularity of Banach spaces used by \citep{Judi08b} to derive concentration inequalities. These systematic choices allow us to derive an affine invariant bound on the complexity of the algorithm in \citep{Nest83}.

When $Q$ is an $\ell_p$ ball, we show that this complexity bound is optimal for most, but not all, high dimensional regimes in $(n,\varepsilon)$. In Section~\ref{s:holder}, we thus extend our results to much more general problems, deriving a new algorithm to optimize H\"older smooth functions using $p$-uniformly convex prox functions. This extends the results of \citep{Nemi85} by incorporating adaptivity to the H\"older continuity of the gradient, and those of \citep{Nest15} by allowing general uniformly convex prox functions, not just strongly convex ones.

These additional degrees of freedom allow us to match optimal complexity lower bounds derived in Section~\ref{s:lp-balls} from \citep{Guzm13} when optimizing on $\ell_p$ balls, with adaptivity in the H\"older smoothness parameter and Lipschitz constant as a bonus. This means that, on $\ell_p$-balls at least, our complexity bounds are optimal not only in terms of target precision $\varepsilon$, but also in terms of smoothness and problem dimension. This shows that, in the $\ell_p$ setting at least, affine invariance does indeed lead to optimal complexity.

\section{Smooth Optimization Algorithm}\label{s:algo}
We first recall the basic structure of the algorithm in \citep{Nest83}. While many variants of this method have been derived, we use the formulation in \citep{Nest03}. We {\em choose a norm $\|\cdot\|$} and assume that the function $f$ in problem~\eqref{eq:min-pb} is convex with Lipschitz continuous gradient, so
\BEQ
\label{eq:lip-ineq}
f(y) \leq f(x) +\langle \nabla f(x),y-x\rangle +\frac{1}{2}L\|y-x\|^2, \quad x,y\in Q,
\EEQ
for some $L>0$. We also {\em choose a {prox} function} $d(x)$ for the set $Q$, i.e. a continuous, strongly convex function on $Q$ with parameter $\sigma$ (see \cite{Nest03a} or \cite{Hiri96} for a discussion of regularization techniques using strongly convex functions). We let $x_0$ be the center of $Q$ for the prox-function $d(x)$ so that
\[
x_0\triangleq\argmin_{x\in Q}d(x),
\]
assuming w.l.o.g. that $d(x_0)=0$, we then get in particular
\BEQ
\label{eq:d-strong-convex}
d(x)\geq\frac{1}{2}\sigma\|x-x_0\|^2.
\EEQ
We write $T_Q(x)$ a solution to the following subproblem
\BEQ
\label{eq:tq}
T_Q(x) \triangleq \argmin_{y\in Q}\left\{\langle \nabla f(x),y-x \rangle+\frac{1}{2}L\|y-x\|^2\right\}.
\EEQ
We let $y_0\triangleq T_Q(x_0)$ where $x_0$ is defined above. We recursively define three sequences of points: the current iterate $x_t$, the corresponding $y_t=T_Q(x_t)$, and the points
\BEQ
\label{eq:zk-def}
z_t \triangleq \argmin_{x \in Q}\left\{\frac{L}{\sigma}d(x)+\sum_{i=0}^t\alpha_i[ f(x_i)+\langle \nabla f(x_i),x-x_i\rangle]\right\}
\EEQ
given a step size sequence $\alpha_k\geq 0$ with $\alpha_0\in(0,1]$ so that
\BEQ
\label{eq:xyz-update}
\BA{l}
x_{t+1}=\tau_t z_t+(1-\tau_t)y_t\\
y_{t+1}=T_Q(x_{t+1})\\
\EA
\EEQ
where $\tau_t=\alpha_{t+1}/A_{t+1}$ with $A_t=\sum_{i=0}^t\alpha_i$. We implicitly assume here that $Q$ is simple enough so that the two subproblems defining $y_t$ and $z_t$ can be solved very efficiently. We have the following convergence result.

\begin{theorem}{\bf \cite{Nest03}.}\label{th:conv}
Suppose $\alpha_t=(t+1)/2$ with the iterates $x_t$, $y_t$ and $z_t$ defined in (\ref{eq:zk-def}) and (\ref{eq:xyz-update}), then for any $t\geq 0$ we have
\[
f(y_t)-f(x^{\star})\leq \frac{4Ld(x^{\star})}{\sigma\,(t+1)^2 }
\]
where $x^{\star}$ is an optimal solution to problem (\ref{eq:min-pb}).
\end{theorem}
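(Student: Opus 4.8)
The plan is to run Nesterov's estimate-sequence induction in its aggregated lower-model form. I introduce the estimate functions
\[
\psi_k(x) \triangleq \frac{L}{\sigma}d(x) + \sum_{i=0}^k \alpha_i\big[f(x_i) + \langle \nabla f(x_i), x - x_i\rangle\big],
\]
so that $z_k = \argmin_{x\in Q}\psi_k(x)$ and $\psi_k^\star \triangleq \psi_k(z_k)$. The whole argument rests on establishing, by induction on $k$, the single inequality
\[
A_k\, f(y_k) \le \psi_k^\star .
\]
Granting this, the theorem follows quickly: since each linear piece $f(x_i)+\langle\nabla f(x_i),x-x_i\rangle$ underestimates the convex function $f(x)$, we get $\psi_k^\star \le \psi_k(x^\star) \le \frac{L}{\sigma}d(x^\star) + A_k f(x^\star)$, hence $f(y_k)-f(x^\star)\le \frac{L d(x^\star)}{\sigma A_k}$, and substituting $A_k=\sum_{i=0}^k (i+1)/2 = (k+1)(k+2)/4 \ge (k+1)^2/4$ yields the stated bound.

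For the induction itself, the two structural facts I would use are: (i) $\psi_k$ is strongly convex with parameter $L$ (because $\frac{L}{\sigma}d$ has modulus $\frac{L}{\sigma}\cdot\sigma=L$ and the added terms are affine), so optimality of $z_k$ on $Q$ gives $\psi_k(x)\ge \psi_k^\star + \frac{L}{2}\|x-z_k\|^2$ for all $x\in Q$; and (ii) convexity of $f$, which lets me downgrade the induction hypothesis $\psi_k^\star \ge A_k f(y_k)$ to the linear bound $\psi_k^\star \ge A_k[f(x_{k+1})+\langle \nabla f(x_{k+1}),y_k-x_{k+1}\rangle]$. Writing $\psi_{k+1}(x)=\psi_k(x)+\alpha_{k+1}[f(x_{k+1})+\langle\nabla f(x_{k+1}),x-x_{k+1}\rangle]$, combining (i) and (ii), and using the defining relation $A_{k+1}x_{k+1}=\alpha_{k+1}z_k+A_k y_k$ to collapse the gradient terms, I expect the cross terms to telescope into
\[
\psi_{k+1}(x) \ge A_{k+1} f(x_{k+1}) + \alpha_{k+1}\langle \nabla f(x_{k+1}), x - z_k\rangle + \frac{L}{2}\|x-z_k\|^2
\]
for every $x\in Q$.

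The step I expect to be the main obstacle is passing from this quadratic lower bound to $A_{k+1}f(y_{k+1})$. The naive move — minimizing the right-hand side over all $x$ to produce $-\frac{\alpha_{k+1}^2}{2L}\|\nabla f(x_{k+1})\|_*^2$ and then invoking the unconstrained decrease $f(y_{k+1})\le f(x_{k+1})-\frac{1}{2L}\|\nabla f(x_{k+1})\|_*^2$ — fails here, since on a constrained set $T_Q(x_{k+1})$ need not realize the full gradient-step decrease. To avoid this I would keep everything inside $Q$ through the change of variable $w=\tau_k x+(1-\tau_k)y_k$, which stays in $Q$ whenever $x\in Q$ and equals $y_{k+1}$ exactly when $x$ reproduces the update; since $\tau_k=\alpha_{k+1}/A_{k+1}$, the bound rewrites as
\[
\psi_{k+1}(x)\ge A_{k+1}\Big[f(x_{k+1})+\langle\nabla f(x_{k+1}),w-x_{k+1}\rangle+\tfrac{L A_{k+1}}{2\alpha_{k+1}^2}\|w-x_{k+1}\|^2\Big].
\]
The condition $\alpha_{k+1}^2\le A_{k+1}$ — which for $\alpha_k=(k+1)/2$ reads $(k+2)/(k+3)\le 1$ — then lets me shrink the quadratic coefficient to $\frac{L}{2}$, and the fact that $y_{k+1}=T_Q(x_{k+1})$ minimizes exactly $f(x_{k+1})+\langle\nabla f(x_{k+1}),\cdot-x_{k+1}\rangle+\frac{L}{2}\|\cdot-x_{k+1}\|^2$ over $Q\ni w$, combined with the Lipschitz inequality \eqref{eq:lip-ineq} evaluated at $y_{k+1}$, bounds the bracket below by $f(y_{k+1})$. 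Minimizing over $x\in Q$ gives $\psi_{k+1}^\star\ge A_{k+1}f(y_{k+1})$, closing the induction. The base case $k=0$ runs the same computation, using the strong-convexity bound $d(x)\ge\frac{\sigma}{2}\|x-x_0\|^2$ directly and the assumption $\alpha_0\in(0,1]$ in place of the coefficient inequality.
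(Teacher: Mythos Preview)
Your argument is correct and is precisely the estimate-sequence proof of \cite{Nest03} that the paper defers to; the paper itself gives no proof beyond the citation, so there is nothing further to compare.
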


If $\varepsilon>0$ is the target precision, Theorem \ref{th:conv} ensures that Algorithm~\ref{alg:smooth} will converge to an $\varepsilon$-accurate solution in no more than
\BEQ \label{eq:maxiter}
\sqrt{\frac{8Ld(x^\star)}{\sigma\varepsilon}}
\EEQ
iterations. In practice of course, $d(x^\star)$ needs to be bounded a priori and $L$ and $\sigma$ are often hard to evaluate.

\begin{algorithm}[ht]  
\caption{Smooth minimization.\label{alg:smooth}} 
\begin{algorithmic} [1]
\REQUIRE $x_0$, the prox center of the set $Q$.
\FOR{$t=0,\ldots,T$}
\STATE Compute $\nabla f(x_t)$.
\STATE Compute $y_t=T_Q(x_t)$.
\STATE Compute $z_t = \argmin_{x \in Q}\left\{\frac{L}{\sigma}d(x)+\sum_{i=0}^t\alpha_i[ f(x_i)+\langle \nabla f(x_i),x-x_i\rangle]\right\}$.
\STATE Set $x_{t+1}=\tau_t z_t + (1-\tau_t)y_t$.
\ENDFOR
\ENSURE $x_T,y_T \in Q$.
\end{algorithmic} 
\end{algorithm} 

While most of the parameters in Algorithm~\ref{alg:smooth} are set explicitly, the norm $\|\cdot\|$ and the prox function $d(x)$ are chosen arbitrarily. In what follows, we will see that a natural choice for both makes the algorithm affine invariant.

\section{Affine Invariant Implementation} \label{s:aff}
We can define an affine change of coordinates $x=Ay$ where $A\in\reals^{n \times n}$ is a nonsingular matrix, for which the original optimization problem in~\eqref{eq:min-pb} is transformed so
\BEQ\label{eq:aff-pb}
\BA{ll}
\mbox{minimize } & f(x)\\
\mbox{subject to} & x \in Q,\\
\EA
\qquad\mbox{ becomes}\qquad
\BA{ll}
\mbox{minimize } & \hat f(y)\\
\mbox{subject to} & y \in \hat Q,\\
\EA
\EEQ
in the variable $y\in\reals^n$, where 
\BEQ\label{eq:aff-chg}
\hat f(y) \triangleq f(Ay) 
\quad\mbox{and}\quad
\hat Q \triangleq A^{-1}Q.
\EEQ
Unless $A$ is pathologically ill-conditioned, both problems are equivalent and should have identical complexity bounds and iterations. In fact, the complexity analysis of Newton's method based on the self-concordance argument developed in \citep{Nest94} produces affine invariant complexity bounds and the iterates themselves are invariant. Here we will show how to choose the norm $\|\cdot\|$ and the prox function $d(x)$ to get a similar behavior for Algorithm~\ref{alg:smooth}.

\subsection{Choosing the Norm}\label{ss:norm}
We start by a few classical results and definitions. Recall that the {\em Minkowski gauge} of a set $Q$ is defined as follows.

\begin{definition}\label{def:mink}
Given $Q\subset \reals^n$ containing zero, we define the Minkowski gauge of $Q$ as
\[
\gamma_Q(x) \triangleq \inf\{ \lambda \geq 0 : x \in \lambda Q\}
\]
with $\gamma_Q(x)=0$ when $Q$ is unbounded in the direction $x$.
\end{definition}

When $Q$ is a compact convex, centrally symmetric set with respect to the origin and has nonempty interior, the Minkowski gauge defines a {\em norm}. We write this norm $\|\cdot\|_Q\triangleq \gamma_Q(\cdot)$. From now on, we will assume that the set $Q$ is centrally symmetric or use for example $\bar Q=Q - Q$ (in the Minkowski sense) for the gauge when it is not (this can be improved and extending these results to the nonsymmetric case is a classical topic in functional analysis). Note that any linear transform of a centrally symmetric convex set remains centrally symmetric. The following simple result shows why $\|\cdot\|_Q$ is potentially a good choice of norm for Algorithm~\ref{alg:smooth}.

\begin{lemma}\label{lem-lip-aff}
Suppose $f: \reals^n \rightarrow \reals$, $Q$ is a centrally symmetric convex set with nonempty interior and let $A\in\reals^{n \times n}$ be a nonsingular matrix. Then $f$ has Lipschitz continuous gradient with respect to the norm $\|\cdot\|_Q$ with constant $L>0$, i.e.
\[
f(y) \leq f(x) +\langle \nabla f(x),y-x\rangle +\frac{1}{2}L\|y-x\|_Q^2, \quad x,y\in Q,
\]
if and only if the function $\hat f(w) \triangleq f(Aw)$ has Lipschitz continuous gradient with respect to the norm $\|\cdot\|_{A^{-1}Q}$ with the same constant~$L$.
\end{lemma}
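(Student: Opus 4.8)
The plan is to prove the equivalence by a direct change of variables, treating the substitution $x = Aw$ as a dictionary that translates every object appearing in the Lipschitz inequality for $f$ into the corresponding object for $\hat f(w) \triangleq f(Aw)$. The whole argument hinges on a single compatibility identity relating the two gauges, namely
\[
\|Aw\|_Q = \|w\|_{A^{-1}Q}, \qquad w \in \reals^n,
\]
after which everything else is bookkeeping. Because $A$ is nonsingular, the map $w \mapsto Aw$ is a bijection carrying $A^{-1}Q$ onto $Q$, so the two quantifications ``for all $x,y \in Q$'' and ``for all $v,w \in A^{-1}Q$'' describe exactly the same pairs of points; this is what lets one substitution settle both implications simultaneously.

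First I would establish the gauge identity. Unwinding Definition~\ref{def:mink},
\[
\|w\|_{A^{-1}Q} = \inf\{\lambda \geq 0 : w \in \lambda A^{-1}Q\} = \inf\{\lambda \geq 0 : Aw \in \lambda Q\} = \|Aw\|_Q,
\]
where the middle step simply applies the nonsingular map $A$ inside the membership condition ($w \in \lambda A^{-1}Q \iff Aw \in \lambda Q$). I would also record the chain rule $\nabla \hat f(w) = A^\top \nabla f(Aw)$, obtained by differentiating $\hat f = f \circ A$. One should check in passing that $A^{-1}Q$ is again centrally symmetric with nonempty interior, so that $\|\cdot\|_{A^{-1}Q}$ is genuinely a norm, but this is immediate since invertible linear maps preserve both properties.

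With these in hand, assume $f$ satisfies the Lipschitz inequality on $Q$, fix $v,w \in A^{-1}Q$, and apply that inequality at the points $x = Av$ and $y = Aw$, which lie in $Q$. The left-hand side becomes $f(Aw) = \hat f(w)$ and the leading right-hand term becomes $f(Av) = \hat f(v)$; the linear term rewrites as $\langle \nabla f(Av), A(w-v)\rangle = \langle A^\top \nabla f(Av), w-v\rangle = \langle \nabla \hat f(v), w-v\rangle$; and the quadratic term becomes $\tfrac12 L \|A(w-v)\|_Q^2 = \tfrac12 L \|w-v\|_{A^{-1}Q}^2$ by the gauge identity. Collecting these gives precisely the Lipschitz inequality for $\hat f$ on $A^{-1}Q$ with the same constant $L$. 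The converse follows by running the identical substitution in reverse, setting $v = A^{-1}x$ and $w = A^{-1}y$ for $x,y \in Q$ (equivalently, by replacing $A$ with $A^{-1}$), since the substitution is invertible.

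The only genuinely substantive ingredient is the gauge identity, and even that is a one-line consequence of the definition; the remainder is a mechanical translation through the chain rule. The main point to watch is keeping the domains aligned, that is, verifying that $x,y$ exhaust $Q$ exactly when $v,w$ exhaust $A^{-1}Q$, so that no pairs of points are dropped or spuriously introduced when passing between the two formulations.
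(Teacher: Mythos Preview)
Your proof is correct and follows essentially the same route as the paper: a direct change of variables $x=Av$, $y=Aw$, the gauge identity $\|Aw\|_Q=\|w\|_{A^{-1}Q}$, and the chain rule $\nabla \hat f(w)=A^\top\nabla f(Aw)$ to translate each term of the Lipschitz inequality. If anything, you are slightly more explicit than the paper in verifying the gauge identity and the domain bijection, but the argument is the same.
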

\begin{proof}
Let $w,y\in Q$, with $y=Az$ and $x=Aw$, then
\[
f(y) \leq f(x) +\langle \nabla f(x),y-x\rangle +\frac{1}{2}L\|y-x\|_Q^2, \quad x,y\in Q,
\]
is equivalent to 
\[
f(Az) \leq f(Aw) +\left\langle A^{-T} \nabla_{\!w} f(Aw), Az-Aw\right\rangle +\frac{1}{2}L\|Az-Aw\|_Q^2, \quad z,w\in A^{-1}Q,
\]
and, using the fact that $\|Aw\|_Q=\|w\|_{A^{-1}Q}$, this is also
\[
f(Az) \leq f(Aw) +\left\langle \nabla_{\!w} f(Aw),A^{-1}(Az-Aw)\right\rangle +\frac{1}{2}L\|z-w\|_{A^{-1}Q}^2, \quad z,w\in A^{-1}Q,
\]
hence the desired result.
\end{proof}

An almost identical argument shows the following analogous result for the property of \emph{strong convexity} with respect to the norm~$\|\cdot\|_Q$ and affine changes of coordinates. However, when starting from the above Lemma \ref{lem-lip-aff}, this can also be seen as a consequence of the well-known duality between smoothness and strong convexity (see e.g. \citep[Chap. X, Theorem 4.2.1]{Hiri96}).

\begin{theorem}  \label{thm:str_cvx_smooth}
Let $f:Q\to\reals$ be a convex l.s.c. function. Then $f$ is strongly convex w.r.t. norm $\|\cdot\|$ with constant $\mu>0$ if and only
$f^{\ast}$ has Lipschitz continuous gradient w.r.t. norm $\|\cdot\|_{\ast}$ with constant $L=1/\mu$. 
\end{theorem}

From the previous two results, we immediately have the following lemma.
\begin{lemma}\label{lem-strong-aff}
Suppose $f: \reals^n \rightarrow \reals$, $Q$ is a centrally symmetric convex set with nonempty interior and let $A\in\reals^{n \times n}$ be a nonsingular matrix. Suppose $f$ is strongly convex with respect to the norm $\|\cdot\|_Q$ with parameter $\sigma>0$, i.e.
\[
f(y) \geq f(x) +\langle \nabla f(x),y-x\rangle +\frac{1}{2}\sigma\|y-x\|_Q^2, \quad x,y\in Q,
\]
if and only if the function $\hat f(w) \triangleq f(Aw)$ is strongly convex with respect to the norm $\|\cdot\|_{A^{-1}Q}$ with the same parameter $\sigma$.
\end{lemma}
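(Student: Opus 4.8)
The plan is to mirror exactly the computation used in the proof of Lemma~\ref{lem-lip-aff}, since strong convexity is the same type of inequality with the sense reversed and $\sigma$ in place of $L$. First I would introduce the change of coordinates $y=Az$ and $x=Aw$, so that as $x,y$ range over $Q$ the new variables $z,w$ range over $A^{-1}Q=\hat Q$. Substituting into the strong convexity inequality
\[
f(y) \geq f(x) +\langle \nabla f(x),y-x\rangle +\tfrac{1}{2}\sigma\|y-x\|_Q^2, \quad x,y\in Q,
\]
turns the left-hand and first right-hand terms into $f(Az)$ and $f(Aw)$ respectively, and the quadratic term into $\tfrac{1}{2}\sigma\|Az-Aw\|_Q^2$.

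Next I would handle the inner-product term by the chain rule for the gradient of the composed map $w\mapsto f(Aw)$. Since $\nabla_{\!w}f(Aw)=A^{T}\nabla f(Aw)$, equivalently $\nabla f(Aw)=A^{-T}\nabla_{\!w}f(Aw)$, I would rewrite
\[
\langle \nabla f(Aw),\,Az-Aw\rangle=\bigl\langle A^{-T}\nabla_{\!w}f(Aw),\,A(z-w)\bigr\rangle=\bigl\langle \nabla_{\!w}f(Aw),\,z-w\bigr\rangle,
\]
using $\langle A^{-T}u,Av\rangle=\langle u,v\rangle$. For the quadratic term I would invoke the gauge identity $\|Ax\|_Q=\|x\|_{A^{-1}Q}$ already established in the proof of Lemma~\ref{lem-lip-aff}, which gives $\|Az-Aw\|_Q=\|A(z-w)\|_Q=\|z-w\|_{A^{-1}Q}$.

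Assembling these substitutions shows that the original inequality on $Q$ is equivalent to
\[
f(Az) \geq f(Aw) +\langle \nabla_{\!w} f(Aw),z-w\rangle +\tfrac{1}{2}\sigma\|z-w\|_{A^{-1}Q}^2, \quad z,w\in A^{-1}Q,
\]
which is precisely the statement that $w\mapsto f(Aw)$ is strongly convex with respect to $\|\cdot\|_{A^{-1}Q}$ with the same parameter $\sigma$. Since every step is a reversible substitution (the change of variables $x=Aw$ is a bijection because $A$ is nonsingular, and the gauge identity is an equality), the equivalence runs in both directions, yielding the ``if and only if.''

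I do not anticipate a genuine obstacle here: the content is identical to Lemma~\ref{lem-lip-aff} with the inequality reversed, so the only things to be careful about are bookkeeping points, namely that $A^{-1}Q$ is again centrally symmetric with nonempty interior (so that $\|\cdot\|_{A^{-1}Q}$ is a bona fide norm, as noted in the text), and that the gradient transforms by $A^{T}$ rather than $A$ so the inner product collapses cleanly. The mild subtlety worth stating explicitly is that all four terms transform so as to preserve the \emph{constant} $\sigma$ exactly, which is the whole point of choosing the gauge norm; this is what makes the argument ``almost identical'' to the Lipschitz case.
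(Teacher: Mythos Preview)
Your proposal is correct and follows exactly the route the paper intends: the paper does not give a separate proof of this lemma but simply remarks that ``an almost identical argument'' to Lemma~\ref{lem-lip-aff} applies, and your write-up carries out precisely that argument with the inequality reversed and $\sigma$ in place of $L$. There is nothing to add.
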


We now turn our attention to the choice of prox function in Algorithm~\ref{alg:smooth}.

\subsection{Choosing the Prox}\label{ss:prox}

Choosing the norm as $\|\cdot\|_Q$ allows us to define a norm without introducing an arbitrary geometry in the algorithm, since the norm is extracted directly from the problem definition. Notice furthermore that by Theorem \ref{thm:str_cvx_smooth} when $(\|\cdot\|_Q^2)^{\ast}$ is smooth,  we can set $d(x)=\|x\|_Q^2$.
The immediate impact of this choice is that the term $d(x^\star)$ in~\eqref{eq:maxiter} is bounded by one, by construction. This choice has other natural benefits which are highlighted below. We first recall a result showing that the conjugate of a squared norm is the squared dual norm.

\begin{lemma}\label{lem:conj-sqr}
Let $\|\cdot\|$ be a norm and $\|\cdot\|^*$ its dual norm, then
\[
\frac{1}{2}\left(\|y\|^*\right)^2=\sup_x ~y^Tx - \frac{1}{2} \|x\|^2.
\]
\end{lemma}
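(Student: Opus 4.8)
The plan is to prove the identity by the standard radial (scaling) decomposition of the supremum, exploiting the positive homogeneity of the norm. First I would dispose of the trivial case $y=0$, where both sides vanish since $\|x\|^2\geq 0$ forces the supremum to be attained at $x=0$. For $y\neq 0$, I would write an arbitrary $x\neq 0$ as $x=tu$ with $t=\|x\|>0$ and $\|u\|=1$, so that the objective separates as $y^Tx-\frac{1}{2}\|x\|^2=t\,y^Tu-\frac{1}{2}t^2$.

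Then I would take the supremum in two stages. Optimizing first over the direction $u$ on the unit sphere, the definition of the dual norm gives $\sup_{\|u\|=1}y^Tu=\|y\|^*$, which reduces the problem to the one-dimensional concave maximization $\sup_{t\geq 0}\bigl(t\|y\|^*-\frac{1}{2}t^2\bigr)$. This quadratic in $t$ is maximized at $t^\star=\|y\|^*\geq 0$, yielding the value $\frac{1}{2}(\|y\|^*)^2$, which is exactly the claimed right-hand side.

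Alternatively, and perhaps more transparently, I would establish the identity through matching upper and lower bounds. For the upper bound, the generalized Cauchy--Schwarz inequality $y^Tx\leq\|y\|^*\|x\|$ (immediate from the definition of the dual norm) gives $y^Tx-\frac{1}{2}\|x\|^2\leq\|y\|^*\|x\|-\frac{1}{2}\|x\|^2$, and maximizing the right-hand side over the scalar $s=\|x\|\geq 0$ produces $\frac{1}{2}(\|y\|^*)^2$; taking the supremum over $x$ preserves the inequality. For the matching lower bound, I would pick a unit vector $u$ with $y^Tu=\|y\|^*$ and evaluate the objective at $x=\|y\|^*u$, a short computation giving precisely $\frac{1}{2}(\|y\|^*)^2$.

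The only point requiring care is the lower bound: it relies on the supremum defining $\|y\|^*$ being \emph{attained} by some unit vector $u$. In finite dimensions this is immediate, since the unit sphere $\{u:\|u\|=1\}$ is compact and the linear functional $u\mapsto y^Tu$ is continuous, so a maximizer exists. If one prefers to avoid any attainment argument, the identity still follows from the scaling proof combined with a maximizing sequence $u_n$ satisfying $y^Tu_n\to\|y\|^*$; I would flag this as the main, though minor, technical obstacle.
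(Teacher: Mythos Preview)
Your proposal is correct and your second route (the upper bound via $y^Tx\leq\|y\|^*\|x\|$ followed by maximizing the resulting scalar quadratic, then a matching lower bound at $x=\|y\|^*u$ for an attaining unit vector $u$) is exactly the paper's argument; your first route is simply the same two-stage optimization reorganized via the radial decomposition $x=tu$. The paper justifies attainment more tersely (``there must be one by construction of the dual norm''), whereas your compactness remark for finite dimensions is the more explicit version of that step.
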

\begin{proof}
We recall the proof in \cite[Example\,3.27]{Boyd03} as it will prove useful in what follows. By definition, $x^Ty\leq \|y\|^*\, \|x\|$, hence
\[
y^Tx - \frac{1}{2} \|x\|^2 \leq \|y\|^*\, \|x\| - \frac{1}{2} \|x\|^2 \leq \frac{1}{2} \left(\|y\|^*\right)^2
\]
because the second term is a quadratic function of $\|x\|^2$, with maximum $\left(\|y\|^*\right)^2/2$. This maximum is attained by any $x$ such that $x^Ty= \|y\|^*\, \|x\|$ (there must be one by construction of the dual norm), normalized so $\|x\|=\|y\|^*$, which yields the desired result.
\end{proof}

Computing the prox-mapping in~\eqref{eq:tq} amounts to taking the conjugate of $\|\cdot\|^2$, so this last result (and its proof) shows that, in the unconstrained case, solving the prox mapping is equivalent to finding a vector aligned with the gradient, with respect to the Minkowski norm $\|\cdot\|_Q$. We now recall another simple result showing that the dual of the norm $\|\cdot\|_Q$ is given by~$\|\cdot\|_{Q^\circ}$ where $Q^\circ$ is the polar of the set $Q$.

\begin{lemma}\label{lem:dual-mink}
Let $Q$ be a centrally symmetric convex set with nonempty interior, then $\|\cdot\|_Q^* = \|\cdot\|_{Q^\circ}$.
\end{lemma}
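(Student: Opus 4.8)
The plan is to unwind both sides of the claimed identity to the same support function over $Q$, so the whole statement collapses to a short computation. The one genuinely load-bearing preliminary, which I would establish first, is that the gauge $\|\cdot\|_Q=\gamma_Q$ has $Q$ itself as its unit ball, i.e. $\{x:\|x\|_Q\le 1\}=Q$. This is exactly where the hypotheses enter: central symmetry and convexity make $\gamma_Q$ a genuine norm, the nonempty interior containing the origin makes it finite and positive away from $0$, and compactness (hence closedness) guarantees that the infimum defining $\gamma_Q$ is attained so that the sublevel set is $Q$ and not merely its closure or interior. This bookkeeping step is the part I would be most careful about, since it is what pins the unit ball down to $Q$ precisely.

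Granted this, the dual norm reduces immediately to a linear support function,
\[
\|y\|_Q^* \;=\; \sup_{\|x\|_Q\le 1}\langle y,x\rangle \;=\; \sup_{x\in Q}\langle y,x\rangle .
\]
For the other side I would expand $\|y\|_{Q^\circ}=\gamma_{Q^\circ}(y)$ directly from the definition of the polar $Q^\circ=\{u:\langle u,x\rangle\le 1 \text{ for all }x\in Q\}$. For $\mu>0$, the membership $y\in\mu Q^\circ$ is equivalent to $\langle y/\mu,x\rangle\le 1$ for every $x\in Q$, that is, to $\sup_{x\in Q}\langle y,x\rangle\le\mu$. Taking the infimum over such $\mu$ (and using $0\in Q$, so the supremum is nonnegative and the case $\mu=0$ gives only $y=0$) yields
\[
\|y\|_{Q^\circ} \;=\; \inf\{\mu\ge 0 : \sup_{x\in Q}\langle y,x\rangle\le\mu\} \;=\; \sup_{x\in Q}\langle y,x\rangle .
\]

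Comparing the two displays gives $\|y\|_Q^* = \|y\|_{Q^\circ}$, which is the assertion. The argument is essentially a definition-chase, so I do not anticipate any serious analytic obstacle; the only subtlety is the first step, ensuring that the gauge's unit ball is exactly $Q$, and I would phrase that carefully rather than assume it, since an off-by-a-closure error there is the one way this otherwise routine identity could go wrong.
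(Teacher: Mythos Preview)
Your argument is correct and follows essentially the same route as the paper: both reduce each side to the support function $\sup_{x\in Q}\langle y,x\rangle$ by unwinding the gauge and polar definitions. You are somewhat more explicit than the paper about why the unit ball of $\|\cdot\|_Q$ coincides with $Q$ (the paper uses this silently in its final equality), but the underlying computation is identical.
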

\begin{proof}
We write
\BEAS
\|x\|_{Q^\circ} & = & \inf\{ \lambda \geq 0 : x \in \lambda Q^\circ\} =  \inf\{ \lambda \geq 0 : x^Ty\leq \lambda, \mbox{for all } y \in Q\}\\
& = & \inf\left\{ \lambda \geq 0 : \sup_{y\in Q}  x^Ty\leq \lambda\right\} = \sup_{y\in Q}  x^Ty = \|x\|_Q^*
\EEAS
which is the desired result.
\end{proof}

In the light of the results above, we conclude that whenever $Q^\circ$ is smooth 
we obtain a natural prox function $d(x)=\|x\|_Q^2$, whose strong convexity parameter 
is controlled by the Lipschitz constant of the gradient of $\|\cdot\|_{Q^\circ}^2$.
However, this does not cover the case where the squared norm $\|\cdot\|_Q$ is not strongly convex. In that scenario, we need to pick the norm based on $Q$ but find a strongly convex prox function not too different from~$\|\cdot\|^2_Q$. This is exactly the dual of the problem studied by \cite{Judi08b} who worked on concentration inequalities for vector-valued martingales and defined the regularity of a Banach space $(\mathbb{E},\|\cdot\|_\mathbb{E})$ in terms of the smoothness of the best smooth approximation of the norm $\|.\|_\mathbb{E}$. 

We first recall a few more definitions, and we will then show that the regularity constant defined by \cite{Judi08b} produces an affine invariant bound on the  term $d(x^\star)/\sigma$ in the complexity of the smooth algorithm in \citep{Nest83}.

\begin{definition} \label{def:bm}
Suppose $\|\cdot\|_X$ and $\|\cdot\|_Y$ are two norms on a space $\mathbb{E}$, the \emph{distortion} $d(\|\cdot\|_X,\|\cdot\|_Y)$ between these two norms is equal to the smallest product $ab>0$ such that
\[
\frac{1}{b} \|x\|_Y \leq \|x\|_X \leq a \|x\|_Y 
\]
over all $x\in \mathbb{E}$.
\end{definition}

Note that $\log d(\|\cdot\|_X,\|\cdot\|_Y)$ defines a metric on the set of all symmetric convex bodies in $\reals^n$, called the {\em Banach-Mazur distance}. We then recall the regularity definition in \citep{Judi08b}.

\begin{definition} \label{def:reg}
The \emph{regularity constant} of a Banach space $(\mathbb{E},\|.\|)$ is the smallest constant $\Delta>0$ for which there exists a smooth norm $p(x)$ such that 
\begin{enumerate}
\item $p(x)^2/2$ has a Lipschitz continuous gradient with constant $\mu$ w.r.t. the norm $p(x)$, with $1\leq\mu\leq \Delta$,
\item the norm $p(x)$ satisfies
\BEQ\label{eq:disto-reg}
\|x\|^2 \leq p(x)^2 \leq \frac{\Delta}{\mu} \|x\|^2, \quad \mbox{for all $x\in \mathbb{E}$}
\EEQ
hence $d(p(x),\|.\|)\leq \sqrt{\Delta/\mu}$.
\end{enumerate}
\end{definition}

Note that in finite dimension, since all norms are equivalent to the Euclidean norm with distortion at most $\sqrt{\dim \mathbb{E}}$, we know that all finite dimensional Banach spaces are at least $(\dim \mathbb{E})$-regular. Furthermore, the regularity constant is invariant with respect to an affine change of coordinates since both the distortion and the smoothness bounds are. 
We are now ready to prove the main result of this section. 

\begin{proposition}\label{prop:reg-bound}
Let $\varepsilon>0$ be the target precision, suppose that the function $f$ has a Lipschitz continuous gradient with constant $L_Q$ with respect to the norm $\|\cdot\|_Q$ and that the space $(\reals^n,\|\cdot\|_Q^*)$ is $\Delta_Q$-regular, then Algorithm~\ref{alg:smooth} will produce an $\varepsilon$-solution to problem~\eqref{eq:min-pb} in at most
\BEQ\label{eq:aff-bnd}
\sqrt{\frac{4L_Q \Delta_Q}{\varepsilon}}
\EEQ
iterations. The constants $L_Q$ and $\Delta_Q$ are affine invariant.
\end{proposition}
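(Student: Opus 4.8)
The plan is to feed the iteration bound \eqref{eq:maxiter} of Theorem~\ref{th:conv} with a norm and prox extracted from the geometry of $Q$, and to show that the resulting quantity $L\,d(x^\star)/\sigma$ collapses to $L_Q D_Q/2$. Since \eqref{eq:maxiter} guarantees an $\epsilon$-solution in at most $\sqrt{8L\,d(x^\star)/(\sigma\epsilon)}$ iterations, it suffices to exhibit a choice of $\|\cdot\|$ and $d$ for which $L\,d(x^\star)/\sigma\le L_Q D_Q/2$ and, separately, a choice for which $L\,d(x^\star)/\sigma\le L_Q n$. I would run Algorithm~\ref{alg:smooth} with the norm dictated by the regularity of the dual space: since $(\reals^n,\|\cdot\|_Q^*)$ is $D_Q$-regular, Definition~\ref{def:reg} provides a smooth norm $p$ on the dual with $p^2/2$ having $\mu$-Lipschitz gradient with respect to $p$ for some $1\le\mu\le D_Q$, and satisfying \eqref{eq:disto-reg}. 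I would then take the algorithm's norm to be the dual norm $p^*$ on the primal space and set the prox to $d(x)=\tfrac12 p^*(x)^2$, so that both subproblems in \eqref{eq:tq}--\eqref{eq:zk-def} are measured in $p^*$.

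Two classical facts then do the work. First, the duality between smoothness and strong convexity (the statement quoted just before Definition~\ref{def:bm}, underpinned by Lemmas~\ref{lem:conj-sqr} and~\ref{lem:dual-mink}) turns the $\mu$-smoothness of $p^2/2$ with respect to $p$ into $(1/\mu)$-strong convexity of $(p^*)^2/2$ with respect to $p^*$, so the algorithm's parameter is $\sigma=1/\mu$. Second, dualizing \eqref{eq:disto-reg} (its lower bound becomes the dual's upper bound, and conversely) yields $p^*(y)^2\le\|y\|_Q^2\le(D_Q/\mu)\,p^*(y)^2$, which transfers the Lipschitz hypothesis: the $L_Q$-bound of $\nabla f$ with respect to $\|\cdot\|_Q$ becomes an $L=L_Q D_Q/\mu$ bound with respect to $p^*$. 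Since $x^\star\in Q$ forces $\|x^\star\|_Q\le1$, we get $d(x^\star)=\tfrac12 p^*(x^\star)^2\le\tfrac12\|x^\star\|_Q^2\le\tfrac12$, and substituting,
\[
\frac{L\,d(x^\star)}{\sigma}\ \le\ \frac{(L_Q D_Q/\mu)\cdot\tfrac12}{1/\mu}\ =\ \frac{L_Q D_Q}{2},
\]
where the unknown modulus $\mu$ cancels exactly. Plugging this into \eqref{eq:maxiter} gives the $D_Q/2$ branch of \eqref{eq:aff-bnd}.

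For the $n$ branch I would use the unconditional fallback noted after Definition~\ref{def:reg}: by John's theorem every symmetric convex body lies at Banach--Mazur distance at most $\sqrt n$ from an ellipsoid, so $(\reals^n,\|\cdot\|_Q^*)$ is $n$-regular, witnessed by a Euclidean norm (for which $\mu=1$). Rerunning the computation above with this witness, or directly taking $d(x)=\tfrac12\|x\|_{\mathcal E}^2$ for the John ellipsoid $\mathcal E$ of $Q$ and using $\|x^\star\|_{\mathcal E}\le\sqrt n\,\|x^\star\|_Q$, bounds $L\,d(x^\star)/\sigma$ by a multiple of $L_Q n$ that is available regardless of the value of $D_Q$. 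Retaining the better of the two choices produces the $\min\{D_Q/2,n\}$ in \eqref{eq:aff-bnd}. Affine invariance is then immediate: $L_Q$ is invariant by Lemma~\ref{lem-lip-aff} (with Lemma~\ref{lem-strong-aff} covering the prox side), while $D_Q$ is invariant because, as already observed, both the distortion and the smoothness bounds entering Definition~\ref{def:reg} are unchanged under $x=Ay$, together with the fact that the polar, and hence $\|\cdot\|_Q^*$, transforms consistently.

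The delicate step is the interplay between the two roles of $\mu$: it measures the strong convexity gained from the prox but simultaneously inflates the transferred Lipschitz constant, and the crux is that these two effects cancel, leaving only $D_Q$. I would therefore be most careful in stating the smoothness/strong-convexity duality with the correct reciprocal constant and in dualizing \eqref{eq:disto-reg} with the correct orientation of the inequalities, since a slip in either place would leave a spurious factor of $\mu$ and break both the clean bound and its affine invariance.
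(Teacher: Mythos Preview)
Your argument is correct and lands on the same bound, but you route the distortion through a different slot than the paper does. The paper runs Algorithm~\ref{alg:smooth} with the \emph{problem norm} $\|\cdot\|_Q$ itself: it keeps $L=L_Q$ untouched, takes $d(x)=\tfrac12 p^*(x)^2$, and uses the lower inequality $p^*(y)^2\ge(\mu/D_Q)\|y\|_Q^2$ to show that $d$ is $(1/D_Q)$-strongly convex with respect to $\|\cdot\|_Q$, so that $d(x^\star)/\sigma\le D_Q/2$ directly (the modulus $\mu$ cancels inside the strong-convexity transfer). You instead run the algorithm with the \emph{witness norm} $p^*$: you keep $\sigma=1/\mu$ exact and push the distortion into the Lipschitz constant via the upper inequality $\|y\|_Q^2\le(D_Q/\mu)\,p^*(y)^2$, so $\mu$ cancels only at the very end in $L\,d(x^\star)/\sigma$. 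Both are legitimate instantiations of Theorem~\ref{th:conv}, and in fact the ratio $L/\sigma$ appearing in~\eqref{eq:zk-def} is identical ($L_Q D_Q$) in the two versions; only the quadratic penalty in the $T_Q$ step~\eqref{eq:tq} differs. The paper's choice has the mild conceptual advantage that the algorithm's norm is the intrinsic $\|\cdot\|_Q$ and only the prox depends on the regularity witness, which is what makes the affine invariance of the \emph{iterates} transparent via Lemmas~\ref{lem-lip-aff}--\ref{lem-strong-aff}; your choice is equally valid for the complexity bound but ties the $T_Q$ subproblem to the auxiliary $p^*$. Your treatment of the $n$ branch is more explicit than the paper's, which simply relies on the remark after Definition~\ref{def:reg} that every finite-dimensional space is $n$-regular.
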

\begin{proof}
If $(\reals^n,\|\cdot\|_Q^*)$ is $\Delta_Q$-regular, then by Definition~\ref{def:reg}, there exists a norm $p^*(x)$ such that $p^*(x)^2/2$ has a Lipschitz continuous gradient with constant~$\mu$ with respect to the norm $p^*(x)$, and \citep[Prop.\,3.2]{Judi08b} shows by conjugacy that the prox function $d(x)\triangleq p(x)^2/2$ is strongly convex with respect to the norm $p(x)$ with constant $1/\mu$. Now~\eqref{eq:disto-reg} means that
\[
\sqrt{\frac{\mu}{\Delta_Q}}~ \|x\|_Q \leq p(x) \leq \|x\|_Q, \quad \mbox{for all $x\in Q$}
\]
since $\|\cdot\|^{**}=\|\cdot\|$, hence
\BEAS
d(x+y) & \geq & d(x) + \langle \partial d(x), y\rangle + \frac{1}{2\mu}p(y)^2\\
& \geq & d(x) + \langle \partial d(x), y\rangle + \frac{1}{2\Delta_Q}\|y\|_Q^2\\
\EEAS
so $d(x)$ is strongly convex with respect to $\|\cdot\|_Q$ with constant $\sigma=1/\Delta_Q$, and using~\eqref{eq:disto-reg} as above
\[
\frac{d(x^\star)}{\sigma} = \frac{p(x^\star)^2 \Delta_Q}{2} \leq \frac{\|x^\star\|_Q^2 \Delta_Q}{2} \leq \frac{\Delta_Q}{2}
\]
by definition of $\|\cdot\|_Q$, if $x^\star$ is an optimal (hence feasible) solution of problem~\eqref{eq:min-pb}. The bound in~\eqref{eq:aff-bnd} then follows from \eqref{eq:maxiter} and its affine invariance follows directly from affine invariance of the distortion and Lemmas~\ref{lem-lip-aff} and~\ref{lem-strong-aff}.
\end{proof}

In Section~\ref{s:lp-balls}, we will see that, when $Q$ is an $\ell_p$ ball, the complexity bound in~\eqref{eq:aff-bnd} is optimal for most, but not all, high dimensional regimes in $(n,\varepsilon)$ where $n$ is greater than $\varepsilon^{-1/2}$. In the section that follows, we thus extend Algorithm~\ref{alg:smooth} to much more general problems, optimizing H\"older smooth functions using $p$-uniformly convex prox functions (not just strongly convex ones). These additional degrees of freedom will allow us to match optimal complexity lower bounds, with adaptivity in the H\"older smoothness parameter as a bonus.

\section{H\"older Smooth Functions \& Uniformly Convex Prox}\label{s:holder}
We now extend the results of Section~\ref{s:aff} to problems where the objective $f(x)$ is H\"older smooth and the prox function is $p$-uniformly convex, with arbitrary $p$. 
This generalization is necessary to derive optimal complexity bounds for smooth convex optimization over $\ell_p$-balls when $p>2$, and will require some extensions of the ideas we presented for the standard analysis, which was based on a strongly convex prox.  We will consider a slightly different accelerated method, that can be seen as a combination
of mirror and gradient steps \cite{Alle14}. This variant of the accelerated 
gradient method is not substantially different however from the one used in the previous section, and its purpose is to make the step-size analysis more transparent. It is worth emphasizing that an interesting
byproduct of our method is the analysis of an adaptive step-size policy, which can exploit 
weaker levels of H\"older continuity for the gradient.

In order to motivate our choice of $p$-uniformly convex prox, we begin with an 
example highlighting how the difficult geometries of $\ell_p$-spaces 
when $p>2$ necessarily lead to weak (dimension-dependent) complexity bounds for any prox.

\begin{example} \label{ex:Prox}
Let $2< p\leq\infty$ and let $\mathcal{B}_{p}$ be the unit $p$-ball on $\reals^n$. 
Let $\Psi:\reals^n\to\reals$ be any strongly convex (prox) function w.r.t. $\|\cdot\|_p$
with constant 1, and suppose w.l.o.g. that $\Psi(0)=0$. We will prove that 
\[
\sup_{x\in\mathcal{B}_p} \Psi(x) \geq n^{1-2/p}/2.
\]
We start from point $x_0=0$ and choose a direction $e_{1+}\in\{e_1,-e_1\}$
so that $\langle\nabla \Psi(0),e_{1+}\rangle\geq 0$. By strong convexity, we have, for $x_1\triangleq x_0+\frac{e_1}{n^{1/p}}$, 
\[
\Psi(x_1)\geq \Psi(x_0)+\langle\nabla \Psi(0),e_{1+}\rangle+\frac12\|x_1-x_0\|^2
\geq \frac{1}{2n^{2/p}}.
\] 
Inductively, we can proceed adding coordinate vectors
one by one, $x_{i}\triangleq x_{i-1}+\frac{e_{i+}}{n^{1/p}}$, for $i=1,\ldots,n$, where 
$e_{i+}\in\{e_i,-e_i\}$ is chosen so that $\langle \nabla\Psi(x_{i-1}),e_{i+}\rangle
\geq 0$. For this choice we can guarantee
\[
\Psi(x_{i})\geq \Psi(x_{i-1})+\langle \nabla\Psi(x_{i-1}),e_{i+}\rangle+\frac{1}{2 n^{2/p}}\geq \dfrac{i}{2n^{2/p}}.
\]
At the end, the vector $x_n\in\mathcal{B}_p$ and $\Psi(x_n)\geq n^{1-2/p}/2$.
\end{example}



\subsection{Uniform Convexity and Smoothness}
The previous example shows that strong convexity of the prox function is too restrictive
when dealing with certain domain geometries, such as $Q=\mathcal {B}_p$ when $p>2$. In order to obtain dimension-independent bounds for these cases, we will have to consider relaxed notions of regularity for the prox, namely $p$-uniform convexity and its dual notion of $q$-uniform smoothness. For simplicity, we will only consider the case of subdifferentiable convex functions, which suffices for our purposes.

\begin{definition}[Uniform convexity and uniform smoothness]
Let $2\leq p<\infty$, $\mu>0$ and $Q\subseteq \reals^n$, a closed convex set. A 
subdifferentiable function $\Psi:Q\to\reals$ is $p$-uniformly convex with constant
$\mu$ w.r.t. $\|\cdot\|$ iff for all $x\in\mathring{Q}$
, $y\in Q$,
\BEQ \label{unif_conv_diff}
\Psi(y)\geq \Psi(x)+\langle \nabla \Psi(x),y-x\rangle +\dfrac{\mu}{p}\|y-x\|^p.
\EEQ
Now let $1<q\leq 2$ and $L>0$. A subdifferentiable function 
$\Phi:Q\to\reals$ is $q$-uniformly smooth with constant~$L$ w.r.t. $\|\cdot\|$ 
iff for all $x\in\mathring{Q}$, $y\in Q$,
\BEQ \label{unif_smooth_diff}
\Phi(y)\leq \Phi(x)+\langle \nabla \Phi(x),y-x\rangle +\dfrac{L}{q}\|y-x\|^q.
\EEQ
\end{definition}

From now on, whenever the constant $\mu$ of $p$-uniform convexity is not explicitly stated, 
$\mu=1$.
We turn our attention to the question of how to obtain an affine invariant prox in the uniformly convex setup. 
In the previous section it was observed that the regularity constant of the dual space
provided such tuning among strongly convex prox functions, however we are not aware of extensions of this notion 
to the uniformly smooth setup. Nevertheless, the same purpose can be achieved by directly minimizing the growth 
factor among the class of uniformly convex functions, which leads to the following notion. 

\begin{definition}[Constant of variation]
Given a $p$-uniformly convex function $\Psi:\reals^n\to\reals$, we define its constant 
of variation on $Q$ as $D_{\Psi}(Q)\triangleq \sup_{x\in Q} \Psi(x)-\inf_{x\in Q} \Psi(x)$. Furthermore, we define 
\BEQ\label{def:DpQ}
D_{p,Q} \triangleq \inf_\Psi\left\{\, \sup_{x\in Q}\Psi(x) \,\, \bigg| \,\,
\Psi:Q\to \reals_+ \mbox{ is } p\mbox{-uniformly convex w.r.t. } 
\|\cdot\|,\, \Psi(0)=0 \right \}.
\EEQ
\end{definition}
Some comments are in order. First, for fixed $p$, the constant $D_{p,Q}$ provides 
the optimal constant of variation among $p$-uniformly convex functions over $Q$, which means that, by construction, $D_{p,Q}$ is affine-invariant. Second, Example \ref{ex:Prox} showed that when $2<p<\infty$, we have 
$D_{2,\mathcal{B}_p} \geq n^{1-2/p}/2$, and the function $\Psi(x)=\|x\|_2^2/2$ shows this bound is tight. We will
later see that $D_{p,\mathcal{B}_p}=1$, which is a major improvement for large dimensions. \citep[Prop.\,3.3]{Judi08b} also shows that $\Delta_{Q} \geq D_{2,Q} \geq c\, \Delta_{Q}$, where $\Delta_{Q}$ is the regularity constant defined in~\eqref{def:reg} and $c>0$ is an absolute constant, since $\Psi(x)$ is not required to be a norm here.

When $Q$ is the unit ball of a norm, a classical result by \cite{Pisi75} links the constant of variation in~\eqref{def:DpQ} above with the notion of martingale cotype. A Banach space $(\mathbb{E},\|.\|)$ has M-cotype $q$ iff there is some constant $C>0$ such that for any $T\geq 1$ and martingale difference $d_1,\ldots,d_T\in \mathbb{E}$ we have
\[
\left(\sum_{t=1}^T \Expect\left[\|d_t\|^q\right]\right)^{1/q} \leq C \Expect \left[\left\|\sum_{t=1}^T d_t \right\|\right]
\]
\citet{Pisi75} then shows the following result.
\begin{theorem}\citep{Pisi75}
A Banach space $(\mathbb{E},\|.\|)$ has M-cotype $q$ iff there exists a $q$ uniformly convex norm equivalent to $\|\cdot\|$.
\end{theorem}
In the same spirit, there exists a concrete characterization of a function achieving the optimal constant of variation, see e.g. \citep{Sreb11}. Unfortunately, this characterization does not lead to an efficiently computable prox. For the analysis of our accelerated method with uniformly convex prox, we will also need the notion of {\em Bregman divergence}.

\begin{definition}[Bregman divergence]\label{def:Bregman}
Let $(\reals^n,\|\cdot\|)$ be a normed space, and $\Psi:Q\to\reals$ be a 
$p$-uniformly convex function w.r.t. $\|\cdot\|$. 
We define the Bregman divergence as
\[V_x(y)\triangleq\Psi(y)-\langle \nabla \Psi(x),y-x\rangle -\Psi(x)
\quad \forall x\in \mathring{Q},\,\, \forall y\in Q.\]
Observe that $V_x(x)=0$ and $V_x(y)\geq\frac1p\|y-x\|^p.$
\end{definition}

For starters, let us prove a simple fact that will be useful in the 
complexity bounds.

\begin{lemma}\label{dgf_identity}
Let $\Psi:Q\to\reals$ be a $p$-uniformly convex function, and $V_x(\cdot)$ the corresponding
Bregman divergence. Then, for all $x$, $x^{\prime}$ and $u$ in $Q$
\[ 
V_{x}(u)-V_{x^{\prime}}(u)-V_{x}(x^{\prime}) = 
\langle \nabla V_{x}(x^{\prime}),u-x^{\prime}\rangle. 
\]
\end{lemma}
\begin{proof}
From simple algebra
\begin{eqnarray*}
&&V_{x}(u)-V_{x^{\prime}}(u)-V_{x}(x^{\prime}) \\ 
&=& \Psi(u)-\langle\nabla \Psi(x),u-x\rangle-\Psi(x)
			-[\Psi(u)-\langle\nabla \Psi(x^{\prime}),u-x^{\prime}\rangle-\Psi(x^{\prime})]
			-V_x(x^{\prime}) \\
			&=& \langle\nabla \Psi(x^{\prime})-\nabla \Psi(x),u-x^{\prime}\rangle
			+\underbrace{\Psi(x^{\prime})-\langle\nabla \Psi(x),x^{\prime}-x\rangle-\Psi(x)}_{=V_x(x^{\prime})}-V_x(x^{\prime}).\\
			&=& \langle\nabla \Psi(x^{\prime})-\nabla \Psi(x),u-x^{\prime}\rangle
			 = \langle \nabla V_{x}(x^{\prime}),u-x^{\prime}\rangle.
\EEAS
which is the desired result.
\end{proof}

\subsection{An Accelerated Method for Minimizing H\"older Smooth Functions}

We consider classes of weakly-smooth convex functions. For a H\"older exponent
$\sigma\in(1,2]$ 
we denote the class ${\mathcal F}_{\|\cdot\|}^{\sigma}(Q,L_{\sigma})$ as the set
of convex functions $f:Q\to \reals$ such that for all $x,y\in Q$
\[ \|\nabla f(x) -\nabla f(y)\|_{\ast} \leq L_{\sigma} \,\|x-y\|^{\sigma-1}. \] 
Before describing the method, we first define
a step sequence that is useful in the algorithm. For a given $p$ such that $2\leq p <\infty$,
consider the sequence $(\gamma_t)_{t\geq 0}$ defined by $\gamma_1=1$ and
for any $t>1$, $\gamma_{t+1}$ is the major root of
\[
\gamma_{t+1}^{p} = \gamma_{t+1}^{p-1}+\gamma_t^{p}.
\]
This sequence has the following properties.

\begin{proposition} \label{prop:aux_seq}
The following properties hold for the auxiliary sequence $(\gamma_t)_{t\geq 0}$
\begin{enumerate}
\item The sequence is increasing.
\item $\gamma_t^p = \sum_{s=1}^t \gamma_s^{p-1}$.
\item $\frac{t}{p}\leq \gamma_t\leq t$.
\item $\sum_{s=1}^t \gamma_s^p\leq t \gamma_t^p$.
\end{enumerate}
\end{proposition}
\begin{proof} We get
\begin{enumerate}
\item By definition, $\gamma_{t+1}^{p} = \gamma_{t+1}^{p-1}+\gamma_t^{p}
\geq \gamma_t^p$, thus $\gamma_{t+1}\geq \gamma_t$.
\item By telescoping the recursion, $\gamma_t^p = \sum_{s=1}^t \gamma_s^{p-1}$.
\item For the lower bound, a Fenchel type inequality yields
$$ \gamma_t = \gamma_{t+1}^{1/p_{\ast}}[\gamma_{t+1}-1]^{1/p}
\leq \dfrac{\gamma_{t+1}}{p_{\ast}} +\dfrac{\gamma_{t+1}-1}{p}
=\gamma_{t+1}-\dfrac{1}{p}.$$
The upper bound
is proved by induction as follows
$$(t+1)^{p} = (t+1)^{p-1}+ t(t+1)^{p-1}
> (t+1)^{p-1} + t[t^{p-1}+(p-1)t^{p-2}] 
\geq (t+1)^{p-1}+ \gamma_t^p,$$
where the last inequality holds by induction hypothesis, $\gamma_t\leq t$.
As a conclusion, the major root defining $\gamma_{t+1}$ has to be
at most $t+1$.
\item By $(ii)$, we have,
$$\sum_{s=1}^t \gamma_s^p = \sum_{s=1}^t \sum_{r=1}^s \gamma_s^{p-1}
 = \sum_{r=1}^t (t-r) \gamma_r^{p-1} \leq t \sum_{r=1}^t \gamma_r^{p-1} 
 =t \gamma_t^p.$$
\end{enumerate}
which concludes the proof.
\end{proof}

We now prove a simple Lemma controlling the smoothness of $f$ terms of $\|\cdot\|^p$. This idea is a minor extension of the ``inexact gradient trick'' proposed in \citep{Devo11} and further studied in \citep{Nest15}, which corresponds to the special case where $p=2$ in the results described here. As in \citep{Devo11}, this trick will allow us to minimize H\"older smooth functions by treating their gradient as an inexact oracle on the gradient of a smooth function.

\begin{lemma}\label{lem:Mbound}
Let  $f\in {\mathcal F}_{\|\cdot\|}^{\sigma}(Q,L_{\sigma})$,
then for any $\delta>0$ and
\begin{equation} \label{eqn:MLowerBound}
M \geq  \left[\dfrac{2}{p}\left(\frac{p-\sigma}{\sigma}\right) \dfrac{1}{\delta} 
\right]^{\frac{p-\sigma}{\sigma}} L_{\sigma}^{\frac{p}{\sigma}}
\end{equation}
we have that for all $x,y\in Q$
$$ f(y) \leq f(x) +\langle \nabla f(x),y-x \rangle + \dfrac{1}{p} M \|y-x\|^p +\frac{\delta}{2}.$$
\end{lemma}
\begin{proof} 
By assumption on $f$, the following bound holds for any $x,y\in Q$
\begin{eqnarray*}
f(y) \leq f(x)+\langle \nabla f(x),y-x\rangle+ \dfrac{L_{\sigma}}{\sigma}\|y-x\|^{\sigma}.
\EEAS
Notice first that it suffices to show that for all $t\geq 0$
\begin{equation} \label{ineq:1D}
\dfrac{L_{\sigma}}{\sigma}t^{\sigma} \leq \dfrac{M}{p}t^p+\dfrac{\delta}{2}.
\end{equation}
This can be seen by letting $t=\|y-x\|$ and using \eqref{ineq:1D} 
in the preceding inequality. Let us prove \eqref{ineq:1D}. First recall the following Fenchel type inequality: if $r,s\geq 1$ and $1/r+1/s=1$ then for all $x$ and $y$ we have that
 $x y \leq \frac{1}{r} x^r+\frac{1}{s} y^s$. For $r=p/\sigma$,
 $s=p/(p-\sigma)$ and $x=t^{\sigma}$, we obtain 
\[
\dfrac{L_{\sigma}}{\sigma} t^{\sigma}\leq 
\dfrac{1}{p}\dfrac{L_{\sigma}}{y}t^p 
+ \dfrac{ L_{\sigma}(p-\sigma) }{ p\sigma} y^{ \frac{\sigma}{p-\sigma} }.
\]
Now we choose $y$ so that $\frac{\delta}{2}=\frac{ L_{\sigma}(p-\sigma) }{ p\sigma} y^{ \frac{\sigma}{p-\sigma} }$, which leads to the inequality
\[
\dfrac{L_{\sigma}}{\sigma} t^{\sigma}\leq 
\dfrac{1}{p}\dfrac{L_{\sigma}}{y}t^p +\dfrac{\delta}{2}.
\]
Finally, by our choice of $M$ we have that $M\geq L_{\sigma}/y$,
proving \eqref{ineq:1D} and therefore the result.
\end{proof}

\begin{algorithm}[hb]  
\caption{Accelerated Method with Bregman Prox \label{method:NN85}} 
\begin{algorithmic}[1]
\REQUIRE  $x_0\in Q$
\STATE $y_0 = x_0$, $z_0=x_0$, and $A_0=0$
\FOR{$t=0,\ldots,T-1$}
	   \STATE $\alpha_{t+1} = \gamma_{t+1}^{p-1}/M$
	   \STATE $A_{t+1}=A_t+\alpha_{t+1}$
	\STATE $\tau_t =\alpha_{t+1}/A_{t+1} $
	\STATE $x_{t+1}=\tau_t z_t+(1-\tau_t)y_t$
	\STATE Obtain from oracle $\nabla f(x_{t+1})$, and update
	\begin{eqnarray}
	y_{t+1} &=& \arg\min_{y\in Q}\left\{ \frac{M}{p}\|y-x_{t+1}\|^p
	+ \langle \nabla f(x_{t+1}),y-x_{t+1}\rangle \right\} \label{accGD}\\
	z_{t+1} &=& \arg\min_{z\in Q}\left\{ V_{z_t}(z)+\alpha_{t+1} \langle \nabla f(x_{t+1}),z-z_t\rangle \right\} \label{accMD}
	\end{eqnarray}	
\ENDFOR
\RETURN $y^T$
\end{algorithmic}
\end{algorithm}

As we will show below, the accelerated method described in Algorithm \ref{method:NN85} extends the $\ell_p$-setting for acceleration first proposed by \citep{Nemi85} to nonsmooth spaces, using Bregman divergences. This gives us more flexibility in the choice of prox function and allows us in particular to better fit the geometry of the feasible set.

\begin{proposition} \label{conv_IMD}
Let $f\in {\mathcal F}_{\|\cdot\|}^{\sigma}(Q,L_{\sigma})$ and $\Psi:Q\to \reals$
be $p$-uniformly convex w.r.t. $\|\cdot\|$. Then for any $\varepsilon>0$, setting
$\delta\triangleq\varepsilon/T$, and $M$ satisfying \eqref{eqn:MLowerBound},
the accelerated method in Algorithm \ref{method:NN85} guarantees an accuracy
\[
f(y^T)-f(y^*) \leq \dfrac{D_{\Psi}(Q)}{A_T} +\dfrac{\varepsilon}{2}
\]
after $T$ iterations.
\end{proposition}
\begin{proof}
Let $u\in Q$ be an arbitrary vector. Using the optimality conditions for subproblem~\eqref{accMD}, and Lemma~\ref{dgf_identity}, we get
\begin{eqnarray*}
\alpha_{t+1} \langle \nabla f(x_{t+1}),z_t-u\rangle 
	&  =  & \alpha_{t+1} \langle \nabla f(x_{t+1}),z_t-z_{t+1}\rangle 
		+ \alpha_{t+1} \langle \nabla f(x_{t+1}),z_{t+1}-u\rangle \\
	&\leq& \alpha_{t+1} \langle \nabla f(x_{t+1}),z_t-z_{t+1}\rangle 
		-  \langle \nabla V_{z_t}(z_{t+1}),z_{t+1}-u\rangle \\
	&  =  & \alpha_{t+1} \langle \nabla f(x_{t+1}),z_t-z_{t+1}\rangle 
		+V_{z_t}(u)-V_{z_{t+1}}(u)-V_{z_t}(z_{t+1}) \\
	&\leq& \left[\alpha_{t+1} \langle \nabla f(x_{t+1}),z_t-z_{t+1}\rangle 
		- \frac{1}{p}\|z_t-z_{t+1}\|^p\right] + V_{z_t}(u)-V_{z_{t+1}}(u).
\EEAS
Let us examine the latter term in brackets closely. For this, let 
$v=\tau_t z_{t+1}+(1-\tau_t)y_t$ and note that 
$x_{t+1}-v=\tau_t(z_t-z_{t+1})$. With $\tau_t=\alpha_{t+1}/A_{t+1}$ we also have, using Proposition~\ref{prop:aux_seq}\,(ii),
$$ \dfrac{1}{\tau_t^p} = \left( \dfrac{L \sum_{s=1}^{t+1} \gamma_s^{p-1}}{ L \gamma_{t+1}^{p-1} } \right)^p= \gamma_{t+1}^p = M A_{t+1}.  $$
From this we obtain 
\begin{eqnarray*}
\alpha_{t+1} \langle \nabla f(x_{t+1}),z_t-z_{t+1}\rangle - \frac{1}{p}\|z_t-z_{t+1}\|^p
 &=& \left\langle \frac{\alpha_{t+1}}{\tau_t} \nabla f(x_{t+1}), x_{t+1}-v\right\rangle 
 	-\frac{1}{p\tau_t^p} \|x_{t+1}-v\|^p \\
&=&  A_{t+1} \left[\langle \nabla f(x_{t+1}), x_{t+1}-v\rangle 
 	-\frac{M}{p}\, \|x_{t+1}-v\|^p\right] \\
&\leq& A_{t+1} \left[ \langle \nabla f(x_{t+1}), x_{t+1}-y_{t+1}\rangle 
 	-\frac{M}{p}\, \|x_{t+1}-y_{t+1}\|^p \right]\\
&\leq& A_{t+1} \,\left[f(x_{t+1})-f(y_{t+1})+\frac{\delta}{2}\right],
\EEAS
where the first inequality holds by the definition of $y_{t+1}$, and the
last inequality holds by Lemma \ref{lem:Mbound} and the choice of $M$. This means that
\begin{equation} \label{nabla_estimate}
\alpha_{t+1} \langle \nabla f(x_{t+1}),z_t-u\rangle \leq 
A_{t+1} \,[f(x_{t+1})-f(y_{t+1})+\delta/2]+ V_{z_t}(u)-V_{z_{t+1}}(u).
\end{equation}
From \eqref{nabla_estimate} and other simple estimates
\begin{eqnarray*}
&&\alpha_{t+1}[f(x_{t+1})-f(u)] \\
	&\leq& \alpha_{t+1}\langle\nabla f(x_{t+1}),x_{t+1}-u\rangle \\
	&=&  \alpha_{t+1}\langle\nabla f(x_{t+1}),x_{t+1}-z_t\rangle
		+\alpha_{t+1}\langle\nabla f(x_{t+1}),z_t-u\rangle\\
	&=&\dfrac{(1-\tau_t)\alpha_{t+1}}{\tau_t}\langle\nabla f(x_{t+1}),y_t-x_{t+1}\rangle
		+\alpha_{t+1}\langle\nabla f(x_{t+1}),z_t-u\rangle\\
	&\leq& \dfrac{(1-\tau_t)\alpha_{t+1}}{\tau_t}[f(y_t)-f(x_{t+1})]
		+\alpha_{t+1}\langle\nabla f(x_{t+1}),z_t-u\rangle\\
	&\leq&  \dfrac{(1-\tau_t)\alpha_{t+1}}{\tau_t}[f(y_t)-f(x_{t+1})]
		  +A_{t+1} [f(x_{t+1})-f(y_{t+1})+\delta/2] 
		  + V_{z_t}(u)-V_{z_{t+1}}(u)\\
	&  =  & (A_{t+1}-\alpha_{t+1}) [f(y_t)-f(x_{t+1})] 
		  +A_{t+1}[f(x_{t+1})-f(y_{t+1})+\delta/2] 
		   + V_{z_t}(u)-V_{z_{t+1}}(u).
\EEAS
Therefore
\[
A_{t+1} f(y_{t+1}) -A_t f(y_t) + V_{z_{t+1}}(u)-V_{z_t}(u) 
\leq \alpha_{t+1}f(u) + A_{t+1}\frac{\delta}{2}.
\]
Summing these inequlities, we obtain
\[ A_T f(y_T) +[V_{z_{t+1}}(u)-V_{z_0}(u)] \leq A_T f(u)+\sum_{t=1}^T A_t \frac{\delta}{2}.\]
Now, by Proposition \ref{prop:aux_seq}, we have 
$\dfrac{1}{A_T}\sum_{t=1}^T A_t \leq \dfrac{1}{\gamma_T^p} T\gamma_T^{p}
\leq T$, thus by the choice $\delta=\varepsilon/T$, we obtain
\[ 
f(y_T)- f(u) \leq \frac{V_{z_0}(u)}{A_T}+\dfrac{\varepsilon}{2}.
\]
Definition~\ref{def:Bregman} together with the fact that $\langle \nabla \Psi(x), y-x \rangle \geq 0$ when $x$ minimizes $\Psi(x)$ over $Q$ then yields the desired result.
\end{proof}

In order to obtain the convergence rate of the method, we need to
estimate the value of $A_T$ given the choice of $M$. For this we assume the
bound in \eqref{eqn:MLowerBound} is satisfied with equality. Since
$A_T=\gamma_T^p/M$ we can use Proposition \ref{prop:aux_seq} (iii), so that
\begin{eqnarray*}
A_T &=& \gamma_T^p \left[\dfrac{p}{2}\left(\dfrac{\sigma}{p-\sigma}\right)\dfrac{\varepsilon}{T} \right]^{\frac{p-\sigma}{\sigma}} 
L_{\sigma}^{-\frac{p}{\sigma}} \\
&\geq& p^{-p} T^{p+1-\frac{p}{\sigma}} \varepsilon^{\frac{p}{\sigma}-1}
\left[\dfrac{p}{2}\left(\dfrac{\sigma}{p-\sigma}\right)\right]^{\frac{p-\sigma}{\sigma}} 
L_{\sigma}^{-\frac{p}{\sigma}}. \\
\EEAS
Notice that to obtain an $\varepsilon$-solution it suffices to have 
$A_T\geq 2D_{\Psi}(Q)/\varepsilon$. By imposing this lower bound on the
lower bound obtained for $A_T$ we get the following complexity estimate.

\begin{corollary} \label{cor:alg-2}
Let $f\in {\mathcal F}_{\|\cdot\|}^{\sigma}(Q,L_{\sigma})$ and $\Psi:X\to \reals$
be $p$-uniformly convex w.r.t. $\|\cdot\|$. Setting
$\delta\triangleq\varepsilon/T$, and $M$ satisfying \eqref{eqn:MLowerBound},
the accelerated method in Algorithm \ref{method:NN85} requires
\[ 
T < p\left[2^p\left(\dfrac{p-\sigma}{\sigma}\right)^{p-\sigma} 
\dfrac{ D_{\Psi}(Q)^{\sigma} L_{\sigma}^p }{ \varepsilon^{p} } 
\right]^{\frac{1}{(p+1)\sigma-p}}+1.
\]
iterations to reach an accuracy $\varepsilon$.
\end{corollary}

We will later see that the algorithm above leads to optimal 
complexity bounds (that is, unimprovable up to constant factors),
for $\ell_p$-setups. However, our algorithm is highly sensitive to several
parameters, the most important being $\sigma$ (the smoothness) and $L_{\sigma}$ which sets the step-size. We now focus on 
designing an adaptive step-size policy, that does not require
$L_{\sigma}$ as input, and adapts itself to the best weak smoothness parameter $\sigma\in(1,2]$.

\subsection{An Adaptive Gradient Method}

\begin{algorithm}[h]  
\caption{Accelerated Method with Bregman Prox and Adaptive Stepsize \label{method:adaptAccel}} 
\begin{algorithmic}[1]
\REQUIRE  $x^0\in Q$
\STATE Set $y_0 = x_0$, $z_0=x_0$, $M_0=1$ and $A_0=0$.
\FOR{$t=0,\ldots,T-1$}
		\STATE $M=M_{t}/2$
		\REPEAT
	   		\STATE\label{set_step} Set
			\BEAS
			M &=& 2M\\
			\alpha &=& \max\left\{a: M^{\frac{1}{p-1}} a^{p_{\ast}} -a =A_t \right\} \\
	   		A&=&A_t+\alpha \\
			\tau &=&\alpha/A \\
			x_{t+1}&=&\tau z_t+(1-\tau)y_t
			\EEAS
			\STATE Obtain $\nabla f(x_{t+1})$, and compute
			\[
			y_{t+1} = \arg\min_{y\in Q}\left\{ \frac{M}{p}\|y-x_{t+1}\|^p+ \langle \nabla f(x_{t+1}),y-x_{t+1}\rangle \right\}
			\]
			\UNTIL
			\BEQ\label{if_step}
			f(y_{t+1})\leq f(x_{t+1})+\langle \nabla f(x_{t+1}),y_{t+1}-x_{t+1}\rangle+\dfrac{M}{p}\|y_{t+1}-x_{t+1}\|^p+\dfrac{\tau \varepsilon}{2}
			\EEQ
			\STATE Set $M_{t+1}=M/2$, $\alpha_{t+1}=\alpha$, $A_{t+1} = A$, $\tau_t =\tau$.
			\STATE  Compute
			\BEAS
	z_{t+1} &=& \arg\min_{z\in Q}\left\{ V_{z_t}(z)+\alpha_{t+1} \langle \nabla f(x_{t+1}),z-z_t\rangle \right\}
	\EEAS	
\ENDFOR
\RETURN $y$
\end{algorithmic}
\end{algorithm}

We will now extend the adaptive algorithm in \citep[Th.3]{Nest15} to handle $p$-uniformly convex prox functions using Bregman divergences.  This new method with adaptive step-size policy is described as Algorithm \ref{method:adaptAccel}. From line \ref{set_step} in Algorithm \ref{method:adaptAccel} we get the 
following identities
\begin{eqnarray} 
A^{p-1} &=& \alpha^p M \label{ident1}\\
\dfrac{1}{\tau^p} &=& M A. \label{ident2}
\end{eqnarray}
These identities are analogous to the ones derived for the non-adaptive variant. For
this reason, the analysis of the adaptive variant is almost identical. There are a few 
extra details to address, which is what we do now. First, we need to show that the line-search procedure is feasible. That is, it always terminates in finite time. This is intuitively true from Lemma~\ref{lem:Mbound}, but 
let us make this intuition precise. From \eqref{ident1} and \eqref{ident2} we have
$$
M \tau^{\frac{p-\sigma}{\sigma}} = 
\dfrac{A^{p-1}}{\alpha^p} \left( \dfrac{\alpha}{A}\right)^{\frac{p}{\sigma}-1} 
= \dfrac{1}{\alpha}  \left( \dfrac{A}{\alpha} \right)^{p-\frac{p}{\sigma}} 
\geq \dfrac{1}{\alpha}.
$$
Notice that whenever the condition \eqref{if_step} of Algorithm
\ref{method:adaptAccel} is not satisfied, $M$ is increased by a factor two.  
Suppose the line-search does not terminate, then $ \alpha \to 0$. However,  
by Lemma~\ref{lem:Mbound}, the termination condition~\eqref{if_step} is guaranteed to be satisfied as soon as
$$ M \geq  \left[\dfrac{2}{p}\left(\frac{p-\sigma}{\sigma}\right) \dfrac{1}{\varepsilon \tau} 
\right]^{\frac{p-\sigma}{\sigma}} L_{\sigma}^{\frac{p}{\sigma}},$$
which is a contradiction with $\alpha\to0$.

To produce convergence rates, we need a lower bound on the sequence $A_t$. Unfortunately, the analysis in \citep{Nest15} only works when $p=2$, 
we will thus use a different argument. First, notice that by the line-search rule
$$ 
\dfrac{M_{t+1}}{2} \leq 
\left[\dfrac{2}{p}\left(\frac{p-\sigma}{\sigma}\right) \dfrac{1}{\varepsilon \tau_t} 
\right]^{\frac{p-\sigma}{\sigma}} L_{\sigma}^{\frac{p}{\sigma}},
$$
from which we obtain
\BEAS
\alpha_{t+1}^p &=& \tau_t^p A_{t+1}^p = \dfrac{A_{t+1}^{p-1}}{M_{t+1}} \\
&\geq& A_{t+1}^{p-1} \frac12 \left[\dfrac{p}{2}\left(\frac{\sigma}{p-\sigma}\right) \varepsilon \tau_t \right]^{\frac{p}{\sigma}-1} L_{\sigma}^{-\frac{p}{\sigma}} \\
&\geq& \frac12\left[ \dfrac{\varepsilon p}{2} 
\left(\frac{\sigma}{p-\sigma}\right) \right]^{\frac{p-\sigma}{\sigma}} 
L_{\sigma}^{-\frac{p}{\sigma}}\, 
 A_{t+1}^{p-\frac{p}{\sigma}} \alpha_{t+1}^{\frac{p}{\sigma}-1}.
\EEAS
This allows us to conclude
$$ \alpha_{t+1}^{ \frac{(p+1)\sigma-p}{\sigma} } \geq
\frac12\left[ \dfrac{\varepsilon p}{2} 
\left(\frac{\sigma}{p-\sigma}\right) \right]^{\frac{p-\sigma}{\sigma}} 
L_{\sigma}^{-\frac{p}{\sigma}}\, 
A_{t+1}^{\frac{p\sigma-p}{\sigma}}, $$
which gives an inequality involving $\alpha_{t+1}$ and $A_{t+1}$
\[
\alpha_{t+1} \geq 
\left( 2^{ -\frac{\sigma}{(p+1)\sigma-p} }
\left[ \dfrac{\varepsilon p}{2} 
\left(\frac{\sigma}{p-\sigma}\right) \right]^{ \frac{p-\sigma}{(p+1)\sigma-p} }
L_{\sigma}^{ -\frac{p}{(p+1)\sigma-p} } \right)
\, A_{t+1}^{ \frac{p\sigma-p}{(p+1)\sigma-p} }.
\]
Here is where we need to depart from Nesterov's analysis, as the condition 
$\gamma\geq 1/2$ in that proof does not hold. Instead, we show the following bound.

\begin{lemma} \label{lem:rate_alpha}
Suppose $\alpha_t\geq0$, $\alpha_0=0$ and $A_t=\sum_{j=0}^t \alpha_j$, satisfy
\[
\alpha_t \geq \beta A_t^s
\]
for some $s\in[0,1[$ and $\beta\geq 0$. Then,
\[
A_t \geq ((1-s)\beta t)^\frac{1}{1-s}
\]
for any $t\geq 0$.
\end{lemma}
\begin{proof}
The sequence $A_t$ follows the recursion $A_t-A_{t-1}\geq \beta A_t^s$. The function $h(x)\triangleq x-\beta x^s$ satisfies $h(0)=0$, $h'(0^+)<0$ and $h'(x)$ only has a single positive root. Hence, when $A_{t-1}>0$, the equation
\[
A_t - \beta A_t^s = A_{t-1}
\]
in the variable $A_t$ only has a single positive root, after which $h(A_t)$ is increasing. This means that to get a lower bound on $A_t$ it suffices to consider the extreme case of the sequence satisfying
\[
A_t - A_{t-1} = \beta A_t^s.
\]
Because $A_t$ is increasing, the sequence $A_t-A_{t-1}$ is increasing, hence there exists an increasing, convex, piecewise affine function $A(t)$ that interpolates $A_t$, whose breakpoints are located at integer values of $t$. By construction, this function $A(t)$ satisfies
\[
A^{\prime}(t) = A_{\lfloor t+1 \rfloor}-A_{\lfloor t\rfloor} = \alpha_{\lfloor t+1\rfloor} \geq \beta A_{\lfloor t+1 \rfloor}^s \geq \beta A(t)^s
\]
for any $t\notin \mathbb{N}$. In particular, the interpolant satisfies
\BEQ\label{eq:diff-ineq}
A'(t) \geq \beta A^s(t)
\EEQ
for any $t\geq 0$. Note that $1/A^s(t)$ is a convergent integral around 0, as $A(t)$ is linear around 0, and $A^{\prime}(\cdot)$ can be defined as a right continuous nondecreasing function, which is furthermore constant around 0; therefore the involved functions are integrable, and the Theorem of change of variables holds. Integrating the differential inequality
we get 
\[
\beta t \leq \int_{0}^t \dfrac{A^{\prime}(t)}{A^s(t)} dt = \int_{0}^{A(t)} \dfrac{du}{u^s}= \dfrac{A(t)^{1-s}}{1-s},
\]
yielding the desired result.
\end{proof}

Using Lemma~\ref{lem:rate_alpha} with $s=(p\sigma-p)/((p+1)\sigma-p)$ produces the following bound on $A_T$
\[
A_{T}\geq \frac12 \left(\frac{\sigma}{(p+1)\sigma-p}\right)^{\frac{(p+1)\sigma-p}{\sigma}} \left(\frac{\varepsilon p}{2}\frac{\sigma}{p-\sigma} \right)^{\frac{p-\sigma}{\sigma}} L_{\sigma}^{-\frac{p}{\sigma}} T^{\frac{(p+1)\sigma-p}{\sigma}}.
\]
To guarantee that $A_T\geq 2D_{\Psi}(Q)/\varepsilon$, it suffices to impose
\[ 
T \geq  C(p,\sigma) \left(\dfrac{ D_{\Psi}^{\sigma}(Q) L_{\sigma}^p }{\varepsilon^p} \right)^{\frac{1}{(p+1)\sigma-p}}
\]
where 
\[
C(p,\sigma)\triangleq \left( \dfrac{ (p+1)\sigma-p }{ \sigma } \right) \left(\frac{2(p-\sigma)}{p\sigma}\right)^{\frac{p-\sigma}{(p+1)\sigma-p}} 2^{\frac{2\sigma}{(p+1)\sigma-p}}.
\]

\begin{corollary}\label{cor:bound-iter}
Let $f\in {\mathcal F}_{\|\cdot\|}^{\sigma}(Q,L_{\sigma})$ and $\Psi:X\to \reals$ is $p$-uniformly convex w.r.t. $\|\cdot\|$. Then the number of iterations required by Algorithm~\ref{method:adaptAccel} to produce a solution with accuracy $\varepsilon$ is bounded by
\[
T \leq \inf_{1<\sigma\leq 2} \left[C(p,\sigma)  \left(\dfrac{ D_{\Psi}^{\sigma}(Q) L_{\sigma}^p }{\varepsilon^p} \right)^{\frac{1}{(p+1)\sigma-p}} \right].
\]
\end{corollary}

From Corollary \ref{cor:bound-iter} we obtain the affine-invariant bound on 
iteration complexity. Given a centrally symmetric convex body $Q\subseteq\reals^n$,
we choose the norm as its Minkowski gauge $\|\cdot\|=\|\cdot\|_Q$, and 
$p$-uniformly convex prox as the minimizer defining the optimal $p$-variation
constant, $\sup_{x\in Q}\Psi(x)= D_{p,Q}$. With these choices, the iteration 
complexity is
\[
T \leq \inf_{1<\sigma\leq 2} 
\left[C(p,\sigma)  \left(\dfrac{ D_{p,Q}^{\sigma} L_{\sigma,Q}^p }{\varepsilon^p} \right)^{\frac{1}{(p+1)\sigma-p}} \right],
\]
where $L_{\sigma,Q}$ is the H\"older constant of $f$ quantified in the Minkowski
gauge norm $\|\cdot\|_{Q}$. As a consequence, the bound above is affine-invariant,
since also $D_{p,Q}$ is 
affine-invariant by construction. Observe our iteration bound automatically adapts 
to the best possible weak smoothness parameter $\sigma\in(1,2]$; note however 
that an implementable algorithm requires an accuracy certificate in order to stop 
with this adaptive bound. These details are beyond the scope of this paper, but we
refer to \citep{Nest15} for details. Finally, we will see in what follows that this affine invariant bound also matches corresponding lower bounds when $Q$ is an $\ell_p$ ball.



\section{Explicit Bounds on Problems over $\ell_p$ Balls}\label{s:lp-balls}

\subsection{Upper Bounds}
To illustrate our results, first consider the problem of minimizing a smooth convex function over the unit simplex, written
\BEQ\label{eq:min-simplex}
\BA{ll}
\mbox{minimize } & f(x)\\
\mbox{subject to} & \ones^T x\leq 1,\, x\geq 0,\\
\EA\EEQ
in the variable $x\in\reals^n$. 

As discussed in \citep[\S3.3]{Judi07}, choosing $\|\cdot\|_1$ as the norm and $d(x)=\log n+\sum_{i=1}^n x_i \log x_i$ as the prox function, we have $\sigma=1$ and $d(x^\star)\leq \log n$, which means the complexity of solving~\eqref{eq:min-simplex} using Algorithm~\ref{alg:smooth} is bounded by
\BEQ\label{eq:l1-complex}
\sqrt{8\frac{L_1 \log n}{\varepsilon}}
\EEQ
where $L_1$ is the Lipschitz constant of $\nabla f$ with respect to the $\ell_1$ norm. This choice of norm and prox has a double advantage here. First, the prox term $d(x^\star)$ grows only as $\log n$ with the dimension. Second, the $\ell_\infty$ norm being the smallest among all $\ell_p$ norms, the smoothness bound $L_1$ is also minimal among all choices of $\ell_p$ norms. 

Let us now follow the construction of Section~\ref{s:aff}. The simplex $C=\{x\in\reals^n: \ones^Tx\leq 1, x \geq 0\}$ is not centrally symmetric, but we can symmetrize it as the $\ell_1$ ball. The Minkowski norm associated with that set is then equal to the $\ell_1$-norm, so $\|\cdot\|_Q=\|\cdot\|_1$ here. The space $(\reals^n,\|\cdot\|_\infty)$ is $2 \log n$ regular \citep[Example 3.2]{Judi08b} with the prox function chosen here as $\|\cdot\|_{\alpha}^2/2$, with $\alpha=2\log n/(2\log n -1)$. Proposition~\ref{prop:reg-bound} then shows that the complexity bound we obtain using this procedure is identical to that in~\eqref{eq:l1-complex}. A similar result holds in the matrix case.

\subsubsection{Strongly Convex Prox} We can generalize this result to all cases where $Q$ is an $l_p$ ball. When $p\in[1,2]$, \citep[Ex.\,3.2]{Judi07} shows that the dual norm $\|\cdot\|_{\frac{p}{p-1}}$ is $\Delta_p$ regular, with 
\[
\Delta_p=\inf_{2 \leq \rho < \frac{p}{p-1}} ~ (\rho-1)n^{\frac{2}{\rho}-\frac{2(p-1)}{p}}\leq \min\left\{\frac{p}{p-1},C \log n\right\}, 
\quad \mbox{when } p\in[1,2].
\]
When $p\in[2,\infty]$, the regularity is only controlled by the distortion $d(\|\cdot\|_{\frac{p}{p-1}},\|\cdot\|_2)$, since $\|\cdot\|_\alpha$ is only smooth when $\alpha \geq 2$. This means that 
$\|\cdot\|_{\frac{p}{p-1}}$ is $\Delta_p$ regular, with
\[
\Delta_p=n^{\frac{p-2}{p}},\quad \mbox{when } p\in[2,\infty].
\]
This means that the complexity of solving 
\BEQ\label{eq:min_lp}
\BA{ll}
\mbox{minimize } & f(x)\\
\mbox{subject to} & x\in \mathcal{B}_p\\
\EA\EEQ
in the variable $x\in\reals^n$, where $\mathcal{B}_p$ is the $\ell_p$ ball, using Algorithm~\ref{alg:smooth}, is bounded by
\BEQ\label{eq:lp_simple}
\sqrt{\frac{4L_p \Delta_p}{\varepsilon}}
\EEQ
where $L_p$ is the Lipschitz constant of $\nabla f$ with respect to the $\ell_p$ norm. 
We will later see that this bound is nearly optimal when $1\leq p \leq 2$; however,
the dimension dependence on the bounds when $p>2$ is essentially suboptimal.
In order to obtain the optimal methods in this range we will need our $p$-uniformly convex extensions. 

\subsubsection{Uniformly Convex Bregman Prox}
In the case $2\leq p <\infty$, the function $\Psi_p(w)=\frac{1}{p}\|w\|_p^p$ is $p$-uniformly convex w.r.t. $\|\cdot\|_p$  (see, e.g. \citep{Ball94}), and thus 
$$ D_{p,\mathcal{B}_p}= 1, \quad \mbox{when } p\in[2,\infty].$$
As a consequence, Algorithm \ref{method:NN85} with $\Psi_p$ as $p$-uniformly requires 
\begin{eqnarray} \label{eq:lp_diff}
T \geq C(p) \left(\frac{L_p}{\varepsilon} \right)^{\frac{p}{p+2}}
\end{eqnarray}
iterations to reach a target precision $\varepsilon$, where $C(p)$ is 
a constant only depending on $p$ (which nevertheless diverges as $p\to\infty$). 
This complexity guarantee admits passage to the limit $p\to\infty$ with a 
poly-logarithmic extra factor. Note however that in this case we can avoid 
any dimension dependence by the much simpler Frank-Wolfe method.

\subsection{Lower Bounds}
We show that in the case of $\ell_p$ balls estimates from the proposed methods
are nearly optimal in terms of information-based complexity. 
We consider the class of problems given by the minimization of smooth convex
objectives with a bound $L_p$ on the Lipschitz constant of their gradients w.r.t. norm 
$\|\cdot\|_p$, and the feasible domain given the radius $R_p>0$ ball 
$\mathcal{B}_p(R)$. We emphasize that the lower bounds we 
present only hold for the large-scale regime, where the number of iterations 
$T$ is upper bounded by the dimension of the space, $n$. It is well-known
that when one can afford a super-linear (in dimension) number of iterations, 
methods such as the center of gravity or ellipsoid can achieve 
better complexity estimates \citep{Nemi79}.

First, in the range $1\leq p\leq 2$ we can immediately use the lower bound on risk from \citep{Guzm13}, 
\[
\Omega\left(\frac{L_p R_p^2}{T^2\log(T+1)}\right)
\]
where $T$ is the number of iterations, which translates into the following lower 
bound on iteration complexity
\[
\Omega\left(\sqrt{\frac{L_p R_p^2}{\varepsilon \log n}}\right)
\]
as a function of the target precision $\varepsilon>0$. Therefore, the affine invariant algorithm is optimal, up to poly-logarithmic factors, in this range.

For the second range, $2<p\leq\infty$, the lower bound states the accuracy 
after~$T$ steps is no 
better than
\[
\Omega\left( \frac{L_p R_p^2}{\min[p,\log n] \, T^{1+2/p}} \right),
\]
which translates into the iteration complexity lower bound
\[ 
\Omega\left( \left(\dfrac{L_p R_p^2}{\min[p,\log n]\varepsilon}\right)^{\frac{p}{p+2}} \right).
\]
For fixed $2\leq p<\infty$, this lower bound matches --up to constant factors-- our iteration complexity obtained for these setups. For the case $p=\infty$, our algorithm
also turns out to be optimal, up to polylogarithmic in the dimension factors.

\section{Numerical Results} \label{s:numres}
We now briefly illustrate the numerical performance of our methods on a simple problem taken from \citep{Nest15}. To test the adaptivity of Algorithm~\ref{method:adaptAccel}, we focus on solving the following {\em continuous Steiner problem}
\BEQ\label{eq:steiner}
\min_{\|x\|_2\leq 1} \sum_{i=1}^{m} \|x-x_i\|_q
\EEQ
in the variable $x\in\reals^n$, with parameters $x_i\in\reals^n$ for $i=1,\ldots,m$. The parameter $q\in[1,2]$ controls the H\"older continuity of the objective. We sample the points $x$ uniformly at random in the cube $[0,1]^n$. We set $n=50$, $m=10$ and the target precision $\varepsilon=10^{-12}$. We compare iterates with the optimum obtained using CVX \citep{Gran01}. We observe that while the algorithm solving the three cases $q=1, 1.5, 2$ is identical, it is significantly faster on smoother problems, as forecast by the adaptive bound in Corollary~\ref{cor:bound-iter}.

\begin{figure}[h]
\begin{center}
\begin{tabular}{cc}
\psfrag{obj}[b][t]{$f_t-f^*$}
\psfrag{it}[t][b]{Iteration}
\includegraphics[scale=0.4]{./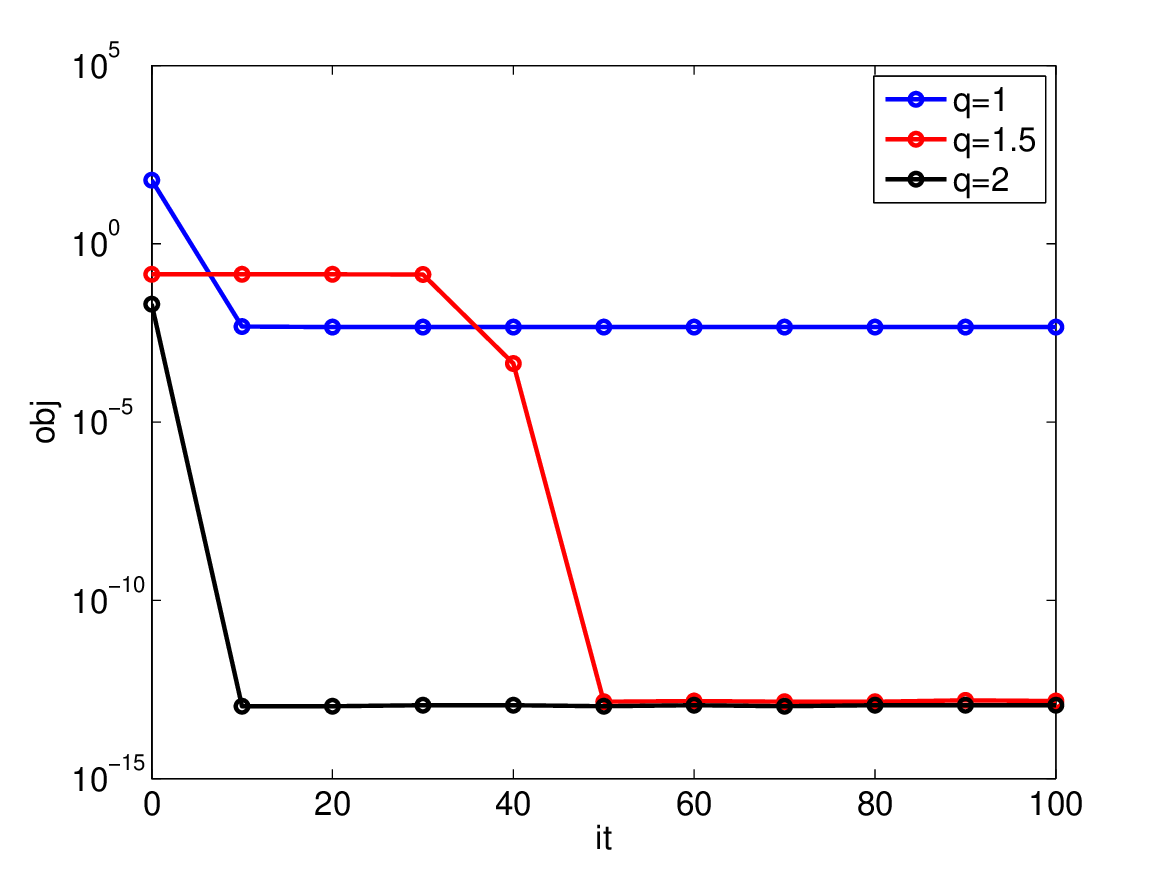} &
\psfrag{M}[b][t]{$M$}
\psfrag{it}[t][b]{Iteration}
\includegraphics[scale=0.4]{./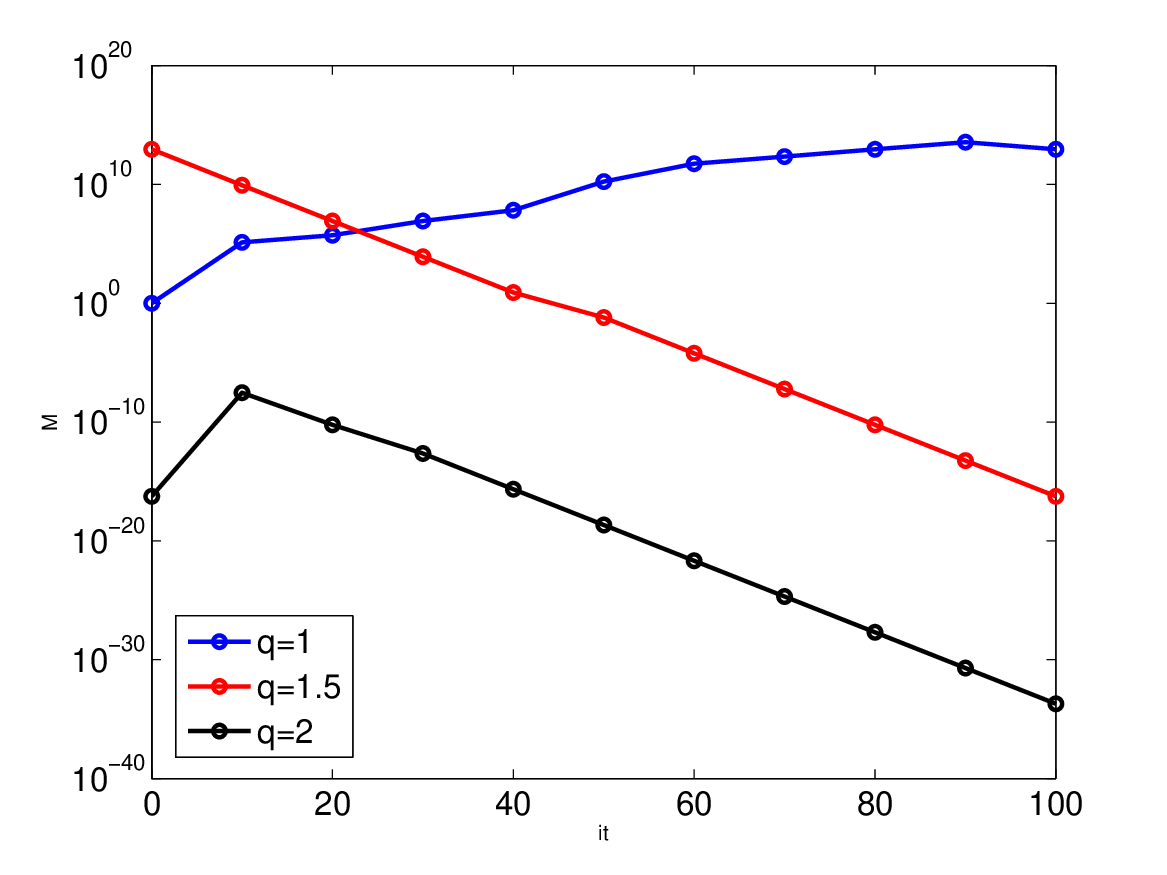}
\end{tabular}
\caption{We test the adaptivity of of Algorithm~\ref{method:adaptAccel}. {\em Left:} Convergence plot of Algorithm~\ref{method:adaptAccel} applied to the continuous Steiner problem~\eqref{eq:steiner} for $q=1, 1.5 , 2$. {\em Right:} Value of the local smoothness parameter $M$ across iterations. \label{fig:steiner}}
\end{center}
 \end{figure}

\section{Conclusion} From a practical point of view, the results above offer guidance in the choice of a prox function depending on the geometry of the feasible set $Q$. On the theoretical side, these results provide affine invariant descriptions of the complexity of an optimization problem based on both the geometry of the feasible set and of the smoothness of the objective function. In our first algorithm, this complexity bound is written in terms of the regularity constant of the polar of the feasible set and the Lipschitz constant of $\nabla f$ with respect to the Minkowski norm. In our last two methods, the regularity constant is replaced by a Bregman diameter constructed from an optimal choice of prox. 

When $Q$ is an $\ell_p$ ball, matching lower bounds on iteration complexity for the algorithm in \citep{Nest83} show that these bounds are optimal in terms of target precision, smoothness and problem dimension, up to a polylogarithmic term.

However, while we show that it is possible to formulate an affine invariant implementation of the optimal algorithm in \citep{Nest83}, we do not yet show that this is always a good idea outside of the $\ell_p$ case... In particular, given our choice of norm the constants $L_Q$ and $\Delta_Q$ are both affine invariant, with $L_Q$ optimal by our choice of prox function minimizing $\Delta_Q$ over all smooth square norms. However, outside of the cases where $Q$ is an $\ell_p$ ball, this does not mean that our choice of norm (Minkowski gauge of a centrally symmetric feasible set) minimizes the product $L_Q \min\{\Delta_Q/2,n\}$, hence that we achieve the best possible bound for the complexity of the smooth algorithm in \citep{Nest83} and its derivatives. Furthermore, while our bounds give clear indications of what an optimal choice of prox should look like, given a choice of norm, this characterization is not constructive outside of special cases like $\ell_p$-balls.


\section*{Acknowledgments}  AA and MJ would like to acknowledge support from the European Research Council (project SIPA). MJ also acknowledges support from the Swiss National Science Foundation (SNSF). The authors would also like to acknowledge support from the chaire {\em \'Economie des nouvelles donn\'ees}, the {\em data science} joint research initiative with the {\em fonds AXA pour la recherche} and a gift from Soci\'et\'e G\'en\'erale Cross Asset Quantitative Research.

\small{\bibliographystyle{plainnat}
\bibsep 1ex
\bibliography{/Users/aspremon/Dropbox/Research/Biblio/MainPerso}}
\end{document}